\documentclass{article}
\usepackage{amsfonts, amssymb}
\usepackage{amsmath}
\usepackage{amsthm}
\usepackage{graphicx}
\usepackage{tikz}

\usepackage{color} 
   \definecolor{cites}{rgb}{0.50 , 0.00 , 0.00}  
   \definecolor{urls} {rgb}{0.00 , 0.00 , 0.50}  
   \definecolor{links}{rgb}{0.00 , 0.00 , 0.50}   
\definecolor{sccol}{rgb}{0,0,0.5}

\providecommand{\blue}{\textcolor{blue}}

\usepackage{enumitem}

\usepackage{etoolbox}
\makeatletter
\patchcmd{\@bibitem}{\ignorespaces}{\label{bib-#1}\ignorespaces}{}{}
\makeatother

\usepackage{hyperref} 
\hypersetup{  
      colorlinks=true,   
      citecolor=cites,   
      urlcolor=urls,     
      linkcolor=links,   
      pdftitle={Localisation of pseudospectra on groups},  
      pdfauthor={Simon N. Chandler-Wilde, Marko Lindner, Christian Seifert}, 
      pdfstartview=FitH,       
      bookmarksopen=false      
}

\usepackage{soul}

\newcommand\C{{\mathbb C}}

\newcommand\N{{\mathbb N}}
\newcommand\I{{\mathbb I}}
\newcommand\J{{\mathbb J}}

\newcommand\Q{{\mathbb Q}}
\newcommand\R{{\mathbb R}}
\newcommand\W{{\mathcal W}}
\newcommand\w{{w}}
\newcommand\Z{{\mathbb Z}}

\newcommand\ph{{\varphi}}
\newcommand\eps{{\varepsilon}}
\newcommand\specn{{\rm spec}}   
\newcommand\speps{{\rm spec}_\eps}

\newcommand\Specn{{\rm Spec}}   
\newcommand\Spec{{\rm Spec}\,}  
\newcommand\Speps{{\rm Spec}_\eps}

\newcommand\dist{{\rm dist}}

\newcommand\clos{{\rm clos\,}}

\newcommand\supp{{\rm supp\,}}
\newcommand\im{{\rm im\,}}
\newcommand\spn{{\rm span}}

\newtheorem{theorem}{Theorem}[section]
\newtheorem{lemma}[theorem]{Lemma}
\newtheorem{corollary}[theorem]{Corollary}
\newtheorem{proposition}[theorem]{Proposition}

\newenvironment{remark}
 {\par\noindent\refstepcounter{theorem}{\bf Remark \thetheorem\ }}
 {\raisebox{1mm}{\framebox{}}\par\pagebreak[2]}

\newenvironment{example}
 {\par\noindent\refstepcounter{theorem}{\bf Example \thetheorem\ }}
 {\raisebox{1mm}{\framebox{}}\par\pagebreak[2]}

\newcommand\Proofend{\rule{2mm}{2mm}}

\renewenvironment{proof}
 {\par\noindent{\bf Proof.}}
 {\Proofend\par\pagebreak[2]}

\numberwithin{equation}{section}   
\numberwithin{figure}{section}  

\begin{document}

\title{\bf Localisation of pseudospectra on discrete groups}
\author{{\sc Simon N. Chandler-Wilde},\quad {\sc Marko Lindner}\quad and\quad {\sc Christian Seifert}}
\date{\today}
\maketitle
\begin{quote}
\renewcommand{\baselinestretch}{1.0}
\footnotesize {\sc Abstract.} 
In this paper we generalise two of the methods and corresponding results from our previous paper ``On spectral inclusion sets and computing the spectra and pseudospectra of bounded linear operators'' [J. Spectr. Theory 14 (2024), 719–804] from tridiagonal operators on $\ell^2(\Z)$ to band operators $A$ on $\ell^2(G,Y)$ with a countable Abelian group $G$ and a Hilbert space $Y$. Again, we cover the pseudospectra of $A$, with error-control, via a union of pseudospectra of finite and moderately sized ``local patches'' of $A$. %
While a major application is to understand the case $G=\Z^d$ that is immanent in many physical problems, our new approach to the so-called $\tau$ and $\tau_1$ methods immediately extends to countable Abelian groups $G$. 
\end{quote}

\noindent
{\it Mathematics subject classification (2020):} Primary 47A10; Secondary 47B36, 46E40, 47B80.\\
{\it Keywords:} band matrix, Wiener algebra, countable abelian groups, truncation methods, pseudospectrum

\section{Introduction and overview}
The computation of spectra and pseudospectra of bounded linear operators is an important but challenging task with applications across science and engineering. Our aim is to compute spectral and pseudospectral enclosures via pseudospectra of finite so-called local patches of the operator, where the full operator $A$ acts on a Hilbert space-valued version of $\ell^2(G)$ with a discrete Abelian group $G$. This reduction to finite patches of $A$ is particularly useful in (but not limited to) the case when $G$ is an infinite group and $A$ corresponds to an infinite matrix $(a_{ij})_{i,j\in G}$. We quantify the sharpness of our enclosure sets and prove convergence to the spectrum, resp.~pseudospectrum, of $A$ as the size of the finite patches goes to infinity.

\medskip\noindent
{\bf Local methods and local patches.\ }
In \cite{CW.Heng.ML:JST} we present three ``local methods'' to compute sets that are guaranteed to include the pseudospectrum -- with known error bounds. By that phrase we mean methods that derive information on the spectrum and pseudospectra of an operator $A$ from many finite and moderately sized ``local patches'' of $A$. 

\begin{figure}[h]
\centering
\begin{tabular}{cp{0mm}cp{0mm}c}
$
\begin{pmatrix}~
\begin{tikzpicture}[scale=0.51]
\fill[gray!10] (1.5,6) rectangle ++(3,-6);
\fill[gray!20] (1.5,5) rectangle ++(3,-4);
\foreach \k in {0,0.5,...,5}
  \fill[gray!50] (\k,6-\k) rectangle ++(1.0,-1.0);
\draw[gray!30,very thin,step=0.5cm] (0,0) grid (6,6);
\draw[black,line width=0.7mm] (1.5,5) rectangle ++(3,-4);
\draw[black,line width=0.4mm] (1.5,6) -- ++(0,-6);
\draw[black,line width=0.4mm] (4.5,6) -- ++(0,-6);
\end{tikzpicture}
~\end{pmatrix}
$
&&
$
\begin{pmatrix}~
\begin{tikzpicture}[scale=0.51]
\fill[gray!20] (1.5,4.5) rectangle ++(3,-3);
\foreach \k in {0,0.5,...,5}
  \fill[gray!50] (\k,6-\k) rectangle ++(1.0,-1.0);
\draw[gray!30,very thin,step=0.5cm] (0,0) grid (6,6);
\draw[black,line width=0.7mm] (1.5,4.5) rectangle ++(3,-3);
\end{tikzpicture}
~\end{pmatrix}
$
\\
the $\tau_1$ method
&&
the $\tau$ method
\end{tabular}
\caption{The finite matrices that arise in the so-called $\tau_1$ and $\tau$
methods in \cite{CW.Heng.ML:JST}.} \label{fig:matrices}
\end{figure}

The difference between our three local methods, termed the $\tau_1$, $\tau$ and $\pi$ method in \cite{CW.Heng.ML:JST}, is what is meant by ``local patches'' of the operator. In this paper, we will only consider the $\tau_1$ and the $\tau$ method.
In 1D, the $\tau_1$ method (the $\tau_1$ standing for one-sided truncation) restricts the input vector to an interval of length $n$ before applying the operator, and the $\tau$ method ($\tau$ standing for the usual two-sided truncation) additionally restricts the output to the same interval of length $n$.
In matrix language, $\tau$ ends up with $n\times n$ matrices as their operator patches, and $\tau_1$ produces $\infty\times n$ matrices.
%
In Figure~\ref{fig:matrices} (that we essentially copy here from \cite{CW.Heng.ML:JST}) we summarise these two approaches figuratively for the tridiagonal case.

\medskip\noindent
{\bf Matrices and block entries.\ }
An operator $A$ on $\ell^2(G,Y)$ with a countable group $G$ and a Hilbert space $Y$ acts on vectors $x=(x_i)_{i\in G}\in\ell^2(G,Y)$ via multiplication by a matrix $(a_{ij})_{i,j\in G}$ that we will often identify with $A$. Note that the matrix entries are operators, i.e.\ $a_{ij}\in L(Y)$ for all $i,j\in G$.
The local patches of $A$ that we mention above are submatrices 
of that matrix $(a_{ij})_{i,j\in G}$.



In theory, there are infinitely many local patches of a fixed size $n$ in an infinite matrix; in fact, this infinite set of patches has a finite $\eps$-net for every $\eps>0$ if $\dim Y<\infty$; and in practice, this potentially infinite set is often finite (and sometimes known).

Note that, while the main analysis in \cite{CW.Heng.ML:JST} is for tridiagonal operators on $\ell^2(\Z)$, most results generalise to the vector valued $\ell^2$ space $\ell^2(\Z,Y)$. It is exactly this extra degree of freedom, the choice of the space $Y$, that enables \cite{CW.Heng.ML:JST} to deal with general band matrices via identification with a block tridiagonal matrix. Here we study the band case and even some band-dominated cases directly, already in case $Y=\C$.

\medskip\noindent
{\bf Localising global quantities.\ }
In \cite{CW.Heng.ML:JST}, for tridiagonal operators $A$ on $\ell^2(\Z)$, we show that, given a patch size $n\in\N$, for every nonzero $x\in\ell^2(\Z)$, there exists a (typically unknown) position $k\in\Z$, where the $n$-sized ``local patches'' $ x_{n,k}$ of $x$ and $A_{n,k}$ of $A$ satisfy\footnote{Note that the matrix entries of $A$ are denoted by lower case $a_{i,j}$ to avoid confusion with the local patches $A_{n,k}$.}
\begin{equation} \label{eq:patchAx_intro}
\frac{\|A_{n,k}\, x_{n,k}\|}{\| x_{n,k}\|}\ \le\ \frac{\|Ax\|}{\|x\|}\ +\ \eps_n
\end{equation}
and where $\eps_n\sim 1/n$ (with the proportionality constant quantified in \cite{CW.Heng.ML:JST} and shown to be optimal for several examples) is a result of truncation that we term the {\sl truncation penalty}.

The operator patches $A_{n,k}$ and the truncation penalty $\eps_n$ both depend on the method chosen, which is why we will write $A_{n,k}^{(M)}$ and $\eps_n^{(M)}$ with $M\in\{\tau_1,\tau\}$ when confusion needs to be avoided.

Evaluating \eqref{eq:patchAx_intro} for an $x$ that (almost) minimises $\frac{\|Ax\|}{\|x\|}$, one can draw conclusions about the norm of the inverse, resolvent norms, and pseudospectra of $A$ versus those of its local patches $A_{n,k}$. The fact that $k\in\Z$ in \eqref{eq:patchAx_intro} is unknown is reflected by taking the supremum of resolvent norms, resp.~the union of pseudospectra, of $A_{n,k}$ over all $k\in\Z$, leading to the promised inclusion sets of the form
\begin{equation} \label{eq:incl_intro}
\speps A\ \subset\ \bigcup_{k\in\Z} \specn_{\eps+\eps_n} A_{n,k}
\end{equation}
for the $\tau$ method. In the $\tau_1$ method also patches of the adjoint $A^*$ will join that formula \eqref{eq:incl_intro}, see below. Inclusion formulas like \eqref{eq:incl_intro} are useful in finding regions of the complex plane where there is, provably, no pseudospectrum and hence, no spectrum. More about pseudospectra in Section \ref{sec:tools}.

\medskip\noindent
{\bf Inclusions from both sides and convergence: the $\tau_1$ method.\ }
In the case of the $\tau_1$ method, abbreviating
\[
\bigcup_{k\in\Z} \Big(\speps A_{n,k}\ \cup\ \speps (A^*)_{n,k}\Big)\ =:\ \Gamma_{n,\eps}(A),
\]
we can complement the $\tau_1$ analogue of \eqref{eq:incl_intro}, leading to a computable set sandwich 
\begin{equation} \label{eq:sandw}
\Gamma_{n,\eps}(A)\ \subset\ \speps A\ \subset\ \Gamma_{n,\eps+\eps_n}(A)
\end{equation}
of $\speps A$  that is even convergent to $\speps A$ in Hausdorff distance as $n\to\infty$.
Indeed, iterating the statement of \eqref{eq:sandw}, getting
\[
\specn_{\eps-\eps_n} A\ \subset\ \Gamma_{n,\eps}(A)\ \subset\ \speps A\ \subset\ \Gamma_{n,\eps+\eps_n}(A)\ \subset\ \specn_{\eps+\eps_n} A,\qquad \eps>\eps_n,
\]
we argue by $\eps_n\to 0$ as $n\to\infty$ and continuity of the map $\eps\mapsto\speps A$ in the Hausdorff distance.

\medskip\noindent
{\bf Our focus.\ }
In \cite{CW.Heng.ML:JST} we moreover study
\begin{itemize} \itemsep-1mm
\item further approximations of the pseudospectrum and in particular of the spectrum of $A$,
\item the efficient reduction from infinitely many to finitely many local patches,
\item extensions to {\sl band-dominated operators} (norm-limits of sequences of band operators),
\item the computational cost in terms of the SCI of Hansen, Colbrook et al \cite{SCIshort,SCI},
\item arguments in favour of one or the other method,
\item and lots of examples.
\end{itemize}

We keep the current paper comparably short, touching only some of these aspects but referring to \cite{CW.Heng.ML:JST} for the others and many more. Our focus here is on the following improvements:
\begin{itemize} \itemsep-1mm
\item instead of $\ell^2(\Z)$, our operators now act on $\ell^2(G)$ with a countable Abelian group $G$ in case $M\in\{\tau_1,\tau\}$,
\item our band and even band-dominated operators $A$ are being studied directly (potentially saving on the penalty $\eps_n$) and not via identification with block tridiagonal operators and their approximates.
\end{itemize}
Our new results are generalisations of \cite{CW.Heng.ML:JST} without losing sharpness: if applied to the immediate setting of \cite{CW.Heng.ML:JST}, that is $G=\Z$ and tridiagonal operators $A$, we recover \eqref{eq:patchAx_intro} and the corresponding conclusions of \cite{CW.Heng.ML:JST}, in both methods, $\tau_1$ and $\tau$, and with the same values for $\eps_n$.

\medskip\noindent
{\bf Related work.\ }
This work has been developping via \cite{HengThesis,BigQuest,PAMM} and of course \cite{CW.Heng.ML:JST}, all for the case when $G=\Z^d$ (mostly with $d=1$). Only in \cite{BigQuest} we allow $d>1$ through some very rough estimates on the commutator $[A,P_k]$ of $A$ and the operator $P_k$ of restriction to a cube located at the point~$k$. Also note our other follow-up article, \cite{CW.ML:finite}, to \cite{CW.Heng.ML:JST}, where we extend the results from $G=\Z$ to finite intervals in $\Z$ and hence to finite (but potentially large) square matrices $A$.

 Extensive use has been made of \eqref{eq:patchAx_intro}, e.g., in \cite{TriRand,BigQuest}. For Schrödinger operators, comparable arguments even go back to \cite{Elliot,AvronMoucheSimon}. Independently, very similar work was developed in \cite{HegeThesis,Hege24} with stunning applications in \cite{Hege22,HegeThesis}. The very recent Master thesis \cite{WittigMSc} of Mattes Wittig and his corresponding paper \cite{Wittig26} are combining the approaches of \cite{HegeThesis,Hege24} and of this current paper. Moreover, in \cite{ColbrookEmbreeFillman2024} the authors study optimal algorithms for various quantities of the spectrum of a bounded self-adjoint operator on Hilbert spaces, such as its Lebesgue measure or its fractal dimension, with applications to Schr\"odinger operators for quasicrystal models, by means of spectral covers.

Related in a wider sense are of course the many spectral inclusion results by Gershgorin arguments and numerical ranges (of $A$, its polynomials or of its resolvent). We refer to the introduction of \cite{CW.Heng.ML:JST} for an extensive review also of that work.

\medskip\noindent
{\bf Structure of the paper.\ }
After a brief recall of tools and notations in Section~\ref{sec:tools} and some examples of groups and their band and band-dominated operators in Section \ref{sec:ex}, we review Laplacians on discrete graphs in Section ~\ref{sec:graph_laplacians}, which is one of the extra tools we need. After that, we state and prove our main results in Section~\ref{sec:main} before we close with the more technical extensions in Section~\ref{sec:extensions}.

\section{Notations and tools} \label{sec:tools}
Let $\N,\Z,\Q,\R$ and $\C$ denote the sets of all natural, integer, rational, real and complex numbers, respectively. Given a Banach space $E$, let $L(E)$ denote the Banach algebra of bounded linear operators on $E$.

\medskip\noindent
{\bf Band and band-dominated operators on discrete groups and the Wiener algebra.\ }
We take a countable Abelian group $(G,+)$, $Y$ a Hilbert space and define two basic operators on $E:= \ell^2(G,Y)$ (i.e.\ we equip $G$ with the counting measure):
\begin{itemize}
\item the operator $M_b$ of multiplication by a function $b\in\ell^\infty(G,L(Y))$, mapping $x\in\ell^2(G,Y)$ to $b\cdot x\in\ell^2(G,Y)$, defined by $(b\cdot x)_i=b_i x_i$ for all $i\in G$, and 
\item the operator $V_j$ of shift by a $j\in G$, mapping $x$ to $V_jx$, with $(V_jx)_i=x_{i-j}$ for all $i\in G$. (We will use the same symbol $V_j$ for the shift by $j\in G$ on $\ell^2(G)$.)
\end{itemize}
%
Every sum
\begin{equation} \label{eq:A}
A\ :=\ \sum_{j\in J}M_{b^{(j)}}V_j,
\end{equation}
with a finite set $J\subset G$ and $b^{(j)}\in\ell^\infty(G,L(Y))$ for $j\in J$, is called a {\sl band operator} on $E=\ell^2(G,Y)$. The functions $b^{(j)}$ are called the {\sl coefficients} or the {\sl diagonals} of $A$.
%
Since  $b^{(j)}\in\ell^\infty(G,L(Y))$ for all $j\in J$ and $J$ is finite, $A\in L(E)$.

Let $BO(E)\subset L(E)$ denote the set of all band operators on $E$. Occasionally we will make use also of the operator class $BDO(E)$, the Banach algebra of all {\em band-dominated operators} on $E=\ell^2(G,Y)$, defined as the closure in $L(E)$ of $BO(E)$, see, e.g., \cite{LiBook}. For $A\in BO(E)$ let
\[
||A||_{\W}\ :=\ \sum_{j\in G}\|b^{(j)}\|_\infty,
\]
noting that $\|A\|\leq \|A\|_\W$,
and let $\W(E)$ denote the  so-called {\sl Wiener algebra}, the closure   of $BO(E)$ in the Wiener algebra norm $\|\cdot\|_\W$. Then $A\in \W(E)$ if and only if
\begin{equation} \label{eq:AW}
A\ =\ \sum_{j\in G}M_{b^{(j)}}V_j,
\end{equation}
for some sequence $(b^{(j)})_{j\in G} \subset \ell^\infty(G,L(Y))$ with $\|A\|_{\W} <\infty$, and $\W(E)$ is  
a proper subalgebra (a Banach algebra if equipped with $\|\cdot\|_\W$) of $BDO(E)$; see, e.g., \cite{LiBook}.

We will also make use, for $0\leq p\leq 1$, of the subspace $\W_p(E)\subset \W(E)$, defined as
\begin{equation} \label{eq:Wpdef}
\W_p(E) := \left\{A\in \W(E): \w_p(A) := \sum_{j\in G}\|b^{(j)}\|^p_\infty < \infty\right\},
\end{equation}
with the understanding that $a^0:= \lim_{p\to 0^+} a^p$, for $a\geq 0$, so that $\w_0(A) = \#\{j\in G: b^{(j)}\neq 0\}$, whence $\W_0(E)=BO(E)$. Clearly, $w_1(A)=\|A\|_{\W}$ and $\W_1(E)=\W(E)$.

\medskip\noindent
{\bf The adjoint operator.\ }
Even though $A$ acts between Hilbert spaces, we work with the Banach space adjoint $A^*$ (in terms of matrices: the transpose -- without conjugates) throughout. In particular, $(\lambda A)^*=\lambda A^*$ and $\Spec A^*=\Spec A$ -- no complex conjugation.

\medskip\noindent
{\bf The lower norm.\ }
For a discrete group $G$, a Hilbert space $Y$, two subsets $\I,\J\subset G$, a bounded linear operator $A:\ell^2(\I,Y)\to\ell^2(\J,Y)$ and a set $T\subset \I$, put
\[
\nu_T(A)\ :=\ \inf\{\|Ax\|:\|x\|=1,\ \supp x\subset T\},
\]
where, for $x=(x_i)_{i\in \I}\in\ell^2(\I,Y)$, $\supp x:=\{i\in \I:\; x_i\ne 0\}$. Clearly, it holds that
\begin{equation}\label{eq:numon}
\nu_{T}(A)\ \ge\ \nu_{U}(A)
\quad\text{if}\quad
T\subset U\subset \I.
\end{equation}
Another standard and basic result (e.g.~\cite[Lemma 2.38]{LiBook}) is that
\begin{equation} \label{eq:Lipschitz}
|\nu_T(A)-\nu_T(B)|\ \le\ \|A-B\|
\end{equation}
for all $T\subset \I$. Abbreviate $\nu_\I(A)=:\nu(A)$.
%
%
A key fact about $\nu(\cdot)$ in the case $\I=\J$ is that
\begin{equation} \label{eq:invnu}
\|A^{-1}\|^{-1}\ =\ \min\big(\,\nu(A),\nu(A^*)\,\big)\ =:\ \mu(A),
\end{equation}
where the expression is interpreted as $\infty^{-1}=0$ if and only if $A$ is not invertible.

\medskip\noindent
{\bf Spectrum and pseudospectra.\ }
Let $\I=\J$, i.e.~$A$ is a bounded linear operator $\ell^2(\I,Y)\to\ell^2(\I,Y)$.
By the singular case of \eqref{eq:invnu}, the {\sl spectrum of $A$} can be characterised via $\mu(\cdot)$:
\[
\Spec A\ :=\ \{\lambda\in\C\ :\ (A-\lambda I)\text{ is not boundedly invertible}\}\ \stackrel{\eqref{eq:invnu}}=\ \{\lambda\in\C\ :\ \mu(A-\lambda I)=0\}.
\]
For $\eps>0$, the set
\begin{equation} \label{eq:speps}
\speps A\ :=\ \{\lambda\in\C\ :\ \mu(A-\lambda I)<\eps\}\ \stackrel{\eqref{eq:invnu}}=\ \{\lambda\in\C\ :\ \|(A-\lambda I)^{-1}\|>1/\eps\}
\end{equation}
is the {\sl open $\eps$-pseudospectrum of $A$}, while, for $\eps\ge 0$,
\begin{eqnarray}
\Speps A &:=& \{\lambda\in\C\ :\ \mu(A-\lambda I)\le\eps\}\label{eq:Speps}\\
&\stackrel{\eqref{eq:invnu}}=&
\left\{\begin{array}{cl}
\{\lambda\in\C\ :\ \|(A-\lambda I)^{-1}\|\ge 1/\eps\}&\text{if } \eps>0,\\[1mm]
\Spec A&\text{if } \eps=0
\end{array}\right. \nonumber
\end{eqnarray}
is the {\sl closed $\eps$-pseudospectrum of $A$}.

The sets $\speps A$ and $\Speps A$ are indeed open, respectively closed. (They are the preimages of the open, resp. closed, sets $[0,\eps)$ and $[0,\eps]$ w.r.t.~the function $\lambda\mapsto \mu(A-\lambda I)$ that is Lipschitz continuous, by \eqref{eq:Lipschitz}.) By \cite{Globevnik74,Globevnik76,Shargo08,Shargo09} it holds that
\begin{equation} \label{eq:closspeps}
\Speps A\ =\ \clos\speps A,\qquad\eps>0.
\end{equation}

Our definitions of spectrum and pseudospectra for $A:\ell^2(\I,Y)\to\ell^2(\I,Y)$ include the cases of finite square matrices, e.g.~our $\tau$ patches $A_{n,k}$. Our $\tau_1$ method however leads to operator patches $\ell^2(T,Y)\to\ell^2(G,Y)$ for certain finite subsets $T\subset G$. 
(In particular, for a band operator $A$, the $\tau_1$ patches can be considered as acting $\ell^2(T,Y)\to\ell^2(T+J,Y)$, where $T+J$ is the Minkowski sum of $T$ and the finite set $J\subset G$ from \eqref{eq:A}.)

The concept of pseudospectra is well-studied also for operators of this kind, i.e.~for rectangular matrices with more rows than columns \cite{WrightTrefethen,TrefEmbBook}:
For a finite and proper subset $T\subset G$, denote
\[
I_T:\ell^2(T,Y)\to \ell^2(G,Y),\quad x\mapsto x
\]
and suppose $A:\ell^2(T,Y)\to\ell^2(G,Y)$.

In this setting, there is one case that needs extra attention: If also $\dim Y<\infty$ then the adjoint of $B:=A-\lambda I_T$, mapping $\ell^2(G,Y)\to\ell^2(T,Y)$, is never injective since $G$ is strictly larger than $T$, which is why $\nu(B^*)=0$ for all $\lambda\in\C$. In this particular case, one deviates from \eqref{eq:speps} and \eqref{eq:Speps}, instead putting
\begin{equation} \label{eq:speps.rect}
\left.\begin{array}{ccc}
\speps A\ :=\ \{\lambda\in\C\ :\ \nu(A-\lambda I_T)<\eps\},&&\eps>0,\\[1mm]
\Speps A\ :=\ \{\lambda\in\C\ :\ \nu(A-\lambda I_T)\le\eps\},&&\eps\ge 0.
\end{array}
\qquad\qquad\qquad\right\}
\end{equation}

In all other cases, square or $\dim Y=\infty$, $\speps A$ and $\Speps A$ are defined by \eqref{eq:speps} and \eqref{eq:Speps}.


\section{Examples: groups and their band(-dominated) operators} \label{sec:ex}
There is a large amount of countable Abelian groups, many of them infinitely generated like $(\Q,+)$, $(\Q\setminus\{0\},\cdot)$, most countable subgroups of $\Z^\N$ with componentwise addition (for example those with a finite $\ell^p$-norm for some fixed $p<\infty$) and direct sums thereof.

More canonic is the situation in the case of finitely generated (hence countable) Abelian groups $G$, where there is a finite set $F=\{f_1,\dots,f_m\}\subset G$ of so-called generators such that every $g\in G$ can be written as a finite sum of $f_1,\dots,f_m$ and their inverses, that is,
\[
g\ =\ \alpha_1f_1+\dots+\alpha_m f_m,\qquad \alpha_i\in\Z.
\]
For a generator $f\in F$, the set $\{\alpha f:\alpha\in\Z\}$ can either be a)~infinite (hence, a copy of $\Z$ in $G$) or b)~finite (hence, a cyclic subgroup of $G$). By the fundamental theorem of finitely generated Abelian groups, see e.g.~\cite[\S 9.7]{Cohn}
\begin{equation} \label{eq:fundthm}
G\ \cong\ \Z^d\ \oplus\ H,
\end{equation}
where $d\in\N\cup\{0\}$ and $H$ is a finite sum of cyclic groups $\Z/q\Z$ with $q\in\N$. If $F$ is minimal, the $\Z^d$ part is generated by $d$ generators $f\in F$ of type a) and $H$ is generated by $m-d$ generators of type b).

Let us look at the examples $G=\Z^d$ and $G=\Z/q\Z$ first before we come to the mixed case.

\medskip\noindent
{\bf Infinite groups of the form $G=\Z^d$.\ }
Many problems in solid state physics are modelled in $\Z^d$, other problems lead to $\Z^d$ after a suitable discretisation of $\R^d$. Band operators on $\ell^2(\Z^d)$ are, by definition, finite sums of finite products of shifts and multiplication operators, each multiplying by a bounded function $b\in\ell^\infty(\Z^d)$. The shift operator $V_g$ with $g\in\Z^d$ moves the data from location $i\in\Z^d$ to location $i+g\in\Z^d$. In case $d=1$ the matrix $(a_{ij})_{i,j\in\Z^d}$ of a band operator on $\ell^2(\Z^d)$ is a $\Z\times\Z$ matrix that is supported on finitely many diagonals -- a bi-infinite band matrix in the usual sense.

\medskip\noindent
{\bf Finite cyclic groups $G=\Z/q\Z$.\ } The cyclic group $\Z/q\Z$ with $q\in\N$ can be identified with a discrete circle with $q$ sites or with the set $\{1,2,\dots,q\}$ with addition modulo $q$. The vectors $u\in\ell^2(\Z/q\Z)$ can thus be imagined as sets of data located at the $q$ sites of that circle. Multiplication operators again act by pointwise multiplication, and the shift $V_g$ moves the data $g$ positions further along the circle.

The matrices $(a_{ij})_{i,j\in G}$ would have their entries indexed over the $q\times q$ torus $(\Z/q\Z)^2$. The latter is a bit unusual to envisage and to work with, which is why we favour
the identification of $G=\Z/q\Z$ with $\{1,\dots,q\}$ with addition modulo $q$. Operators on $\ell^2(\Z/q\Z)$ are then identified with usual $q\times q$ matrices featuring the cyclic structure of the group. For example, the matrix of the shift $V_1$ has ones on the first subdiagonal and an extra one in the north-east corner at position $(i,j)=(1,q)$. More generally, a band operator $A$ with bandwidth $b$ on $\ell^2(\Z/q\Z)$ is identified with a $q \times q$ matrix $(a_{ij})_{i,j=1}^k$ that is supported where $i-j$ is in the $b$-neighborhood of $0$ or $q$ or $-q$; so the usual band structure extends to the two off-diagonal corners of the matrix. 

It is perhaps worth mentioning that, hence, our methods can be used to bound and approximate the spectra and pseudospectra of large finite square matrices with a band structure that extends to the off-diagonal corners. 
For illustration, here is this cyclic situation with $q=6$ and $b=1$ (tridiagonal),
\begin{equation}
\left(\begin{array}{cccccc}
\beta_1&\gamma_2& &  & &{\alpha_6}\\
\alpha_1&\beta_2&\gamma_3\\
&\alpha_2&\beta_3&\gamma_4\\
&&\alpha_3&\beta_4&\gamma_5\\
&&&\alpha_4&\beta_5&\gamma_6\\
{\gamma_1}&&&&\alpha_{5}&\beta_6
\end{array}
\!\right),
\end{equation}
where $\alpha_i, \beta_i,\gamma_i\in\C$ for $i=1,\dots,6$ and where we highlight the two off-diagonal corner entries, \colorbox{black!7}{$\alpha_6$} and \colorbox{black!15}{$\gamma_1$}, to better illustrate the cyclic effects in the following list of all $\tau$ patches $A_{n,k}$ of size $n=3$ as $k=1,\dots,6$:
\[
\begin{array}{ccc}
A_{n,1}=\begin{pmatrix} \beta_2&\gamma_3\\ \alpha_2&\beta_3&\gamma_4\\ &\alpha_3&\beta_4\end{pmatrix},\quad~
& 
A_{n,2}=\begin{pmatrix} \beta_3&\gamma_4\\ \alpha_3&\beta_4&\gamma_5\\ &\alpha_4&\beta_5\end{pmatrix},\quad~
& 
A_{n,3}=
\left(
\begin{array}{ccc|}
\beta_4&\gamma_5&\\ \alpha_4&\beta_5&\gamma_6\\ &\alpha_5&\beta_6\\\hline
\end{array}
\right)
,
\\[6mm]
A_{n,4}=
\left(\!\!
\begin{array}{cc|c}
\beta_5&\gamma_6\\
\alpha_5&\beta_6&{\gamma_1}\\ \hline
&{\alpha_6}&\beta_1
\end{array}
\!\!\right)
,\quad~
& 
A_{n,5}=
\left(\!\!
\begin{array}{c|cc}
\beta_6&{\gamma_1}\\ \hline
{\alpha_6}&\beta_1&\gamma_2\\
&\alpha_1&\beta_2
\end{array}
\!\!\right)
,\quad~
& 
A_{n,6}=
\left(
\begin{array}{|ccc}
\hline
\beta_1&\gamma_2\\ \alpha_1&\beta_2&\gamma_3\\ &\alpha_2&\beta_3
\end{array}
\right)
.
\end{array}
\]
In this context we also want to mention our recent work \cite{CW.ML:finite} on the spectra and pseudospectra of finite square matrices with (or without) a band structure not extending to the off-diagonal corners.

\medskip\noindent
{\bf A mixed example.\ }
Having discussed the single ingredients of \eqref{eq:fundthm}, $\Z^d$ and $\Z/q\Z$, we finally look at a mixed example, precisely, at $G=\Z^2\oplus (\Z/2\Z)$ and its pretty geometric representation.

\begin{example} \label{ex:honeycomb}
The nodes of the so-called honeycomb lattice form an Abelian group $G$ with three generators: $f_1$ and $f_2$ are of type a) and $f_3$ generates a 2-cycle, see Figure \ref{fig:honeycomb}.

\begin{figure}[h]
\centering
\begin{tikzpicture}[scale=1.0]
\clip(0.5,0.3) rectangle (14,7.5);
\foreach \i in {0,...,4} {
  \foreach \j in {0,...,4} {
     \foreach \a in {0,120,240} \draw [black!20,line width=1pt] (3*\i,2*sin{60}*\j) -- +(\a:1);
     \foreach \a in {0,120,240} \draw [black!20,line width=1pt] (3*\i+3*cos{60},2*sin{60}*\j+sin{60}) -- +(\a:1);
  }
}
\foreach \i in {0,...,4} {
  \foreach \j in {0,...,4} {
     \draw[fill] (3*\i,2*sin{60}*\j) circle (0.1);
     \draw[fill] (3*\i+3*cos{60},2*sin{60}*\j+sin{60}) circle (0.1);
     \draw[fill] (1+3*\i,2*sin{60}*\j) circle (0.1);
     \draw[fill] (1+3*\i+3*cos{60},2*sin{60}*\j+sin{60}) circle (0.1);
  }
}
\draw[->,line width=2pt,blue] (6,1.75) -- +(30:1.65) node[xshift=-4.5mm,yshift=2mm] {\Large 
$f_1$};
\draw[->,line width=2pt,red] (6,1.75) -- +(90:1.65) node[left] {\Large 
$f_2$};
\draw[green!30!brown,line width=0.7pt] (3.5,6.05) circle (0.7);
\draw[->, line width=2pt, green!20!brown] (4.2,6.05) arc (0:60:0.7) node[near start, left] {\Large 
$f_4$};
\draw[<->, line width=2pt, green!70!black] (4.15,1.75) -- +(1.6,0) node[midway,above] {\Large 
$f_3$};
\draw[green!70!black,line width=0.7pt] (9.5,6.05) circle (0.7);
\draw[->, line width=2pt, green!70!black] (10.2,6.05) arc (0:180:0.7) node[midway,below] {\Large 
$f_3$};
\draw[fill,brown] (11.5,2.6) circle (0.2) node[below left] {\Large 
$g$};
\draw[white,thick] (11.5,2.6) circle (0.1);
\node[brown] at (11,1.7) {\Large $\bf H_1$};
\node[brown] at (12.5,2.6) {\Large $\bf H_2$};
\node[brown] at (11,3.4) {\Large $\bf H_3$};
\draw[fill,black] (6,1.75) circle (0.2) node[below right] {\Large 
$o$};
\draw[white,thick] (6,1.75) circle (0.1);
\draw[fill,red] (5.5,0.9) circle (0.2) node[right=1mm] {\Large 
$h$};
\draw[white,thick] (5.5,0.9) circle (0.1);
\draw[fill,red] (2.5,2.6) circle (0.2) node[right=1mm] {\Large 
$-h$};
\draw[white,thick] (2.5,2.6) circle (0.1);
\end{tikzpicture}
\caption{The honeycomb lattice $G$ from Example \ref{ex:honeycomb} with its three generators, $f_1, f_2$ and $f_3$. Note that we draw each $f_i$ here in terms of its action $z\mapsto f_i+z$ and not primarily as a point of $G$.\\Also shown are an arbitrary node $g\in G$ and the three hexagons $H_j$ that it is part of as well as an element $h\in G$ and its inverse $-h$ whose location seems unexpected from the perspective of the zero element $o\in G$.} \label{fig:honeycomb}
\end{figure}

In this case, \eqref{eq:fundthm} holds with $d=2$ and $H=\Z/2\Z$. One might be tempted to thinking that $H$ had to be a $6$-cycle, $\Z/6\Z$, generating (e.g., via $f_4$) the hexagon located at an integer combination of $f_1$ and $f_2$, concluding that $G$ were generated by $f_1,f_2$ and $f_4$ -- which is wrong:

Note that every node $g\in G$ is part of three different hexagons. For example, the bold node $g$ shown in Figure \ref{fig:honeycomb} is of the form $g=\alpha_1f_1+\alpha_2f_2+1f_4$, where $\alpha_1f_1+\alpha_2f_2$ is the eastern endpoint of the hexagon $H_1$. But $g$ is also $\beta_1f_1+\beta_2f_2+3f_4$ and also $\gamma_1f_1+\gamma_2f_2+5f_4$ with $(\beta_1,\beta_2)$ and $(\gamma_1,\gamma_2)$ addressing the eastern endpoints of hexagons $H_2$ and $H_3$, respectively, w.r.t.~$(f_1,f_2)$.

This ambiguity (by a factor of three) in the representation of $g\in G$ as an integer combination of $(f_1,f_2,f_4)$ resolves when we pass to the generating system $(f_1,f_2,f_3)$ instead, noting that $f_3=3f_4$ is rotation by $180^o$, i.e., flip between the eastern and western endpoint of a hexagon\footnote{Note that every $g\in G$ is the eastern or western endpoint of some -- and exactly one -- hexagon.}. Then $\beta_1f_1+\beta_2f_2+1f_3$ via hexagon $H_2$ is the only remaining valid representation of our bold node $g$ in Figure \ref{fig:honeycomb}. Obviously, $f_3$ generates the subgroup $\Z/2\Z=:H$, so that $G\cong \Z^2\oplus (\Z/2\Z)$. 

Having understood that $G$ is generated by $f_1,f_2$ and $f_3$, consider $h:=f_1-f_2+f_3$, so that $-h=-f_1+f_2-f_3=-f_1+f_2+f_3$ since $f_3=-f_3$. Both $h$ and $-h$, together with the zero element $o\in G$, are depicted in Figure \ref{fig:honeycomb}. Note how $o$ is not even on the line connecting $h$ to $-h$.

To better understand this apparent mismatch between algebra and geometry, let us look at the grid lines w.r.t.~$(f_1,f_2)$ as depicted in blue on the left in Figure \ref{fig:honeycomb2}. Half of the group $G$, precisely,
\[
\{\alpha_1 f_1+\alpha_2 f_2+\alpha_3 f_3:\alpha_1,\alpha_2\in\Z,\alpha_3\in 2\Z\}
\ =\ \{\alpha_1 f_1+\alpha_2 f_2:\alpha_1,\alpha_2\in\Z\}
\ =:\ \spn_\Z\{f_1,f_2\}
\]
sits on these blue grid lines. The other half of $G$, that is,
\[
\{\alpha_1 f_1+\alpha_2 f_2+\alpha_3 f_3:\alpha_1,\alpha_2\in\Z,\alpha_3\in 2\Z+1\}
\ =\ \{\alpha_1 f_1+\alpha_2 f_2+f_3:\alpha_1,\alpha_2\in\Z\},
\]
sits on the red grid lines on the right of Figure \ref{fig:honeycomb2}, which is a copy of the blue grid, $\spn_\Z\{f_1,f_2\}$, but shifted by $f_3$.
\begin{figure}[h]
\centering
\begin{tikzpicture}[scale=0.7]
\clip(2.3,0.3) rectangle (12.2,7.5);
\foreach \k in {1,...,7} {
   \draw[blue,line width=0.7pt] ({3*\k-12},0) -- +(17.3,10);
   \draw[blue,line width=0.7pt] ({1.5*\k+1.5},0) -- +(0,8);
}  
\foreach \i in {0,...,4} {
  \foreach \j in {0,...,4} {
     \foreach \a in {0,120,240} \draw [black!10,line width=1pt] (3*\i,2*sin{60}*\j) -- +(\a:1);
     \foreach \a in {0,120,240} \draw [black!10,line width=1pt] (3*\i+3*cos{60},2*sin{60}*\j+sin{60}) -- +(\a:1);
  }
}
\foreach \i in {0,...,4} {
  \foreach \j in {0,...,4} {
     \draw[fill] (3*\i,2*sin{60}*\j) circle (0.1);
     \draw[fill] (3*\i+3*cos{60},2*sin{60}*\j+sin{60}) circle (0.1);
     \draw[fill] (1+3*\i,2*sin{60}*\j) circle (0.1);
     \draw[fill] (1+3*\i+3*cos{60},2*sin{60}*\j+sin{60}) circle (0.1);
  }
}
\draw[->,line width=2pt,blue] (7.5,2.6) -- +(30:1.8);
\draw[->,line width=2pt,blue] (7.5,2.6) -- +(90:1.7);
\draw[fill,black] (7.5,2.6) circle (0.2) node[above=4mm, right] {\Large 
$o$};
\draw[white,thick] (7.5,2.6) circle (0.1);
\draw[fill,blue] (9,1.7) circle (0.2) node[above=4mm, right] {\Large 
$a$};
\draw[white,thick] (9,1.7) circle (0.1);
\draw[fill,blue] (6,3.5) circle (0.2) node[left=1mm] {\Large 
$-a$};
\draw[white,thick] (6,3.5) circle (0.1);
\draw[fill,green!70!black] (5.5,2.6) circle (0.2) node[below=5mm, left=-0.5mm] {\Large 
$f_3$};
\draw[white,thick] (5.5,2.6) circle (0.1);
\end{tikzpicture}
~ ~ ~ ~
\begin{tikzpicture}[scale=0.7]
\clip(2.3,0.3) rectangle (12.2,7.5);
\foreach \k in {1,...,7} {
   \draw[blue!30,line width=0.7pt] ({3*\k-12},0) -- +(17.3,10);
   \draw[blue!30,line width=0.7pt] ({1.5*\k+1.5},0) -- +(0,8);
   \draw[red,line width=0.7pt] ({3*\k-11},0) -- +(17.3,10);
   \draw[red,line width=0.7pt] ({1.5*\k+2.5},0) -- +(0,8);
}  
\foreach \i in {0,...,4} {
  \foreach \j in {0,...,4} {
  }
}
\foreach \i in {0,...,4} {
  \foreach \j in {0,...,4} {
     \draw[fill] (3*\i,2*sin{60}*\j) circle (0.1);
     \draw[fill] (3*\i+3*cos{60},2*sin{60}*\j+sin{60}) circle (0.1);
     \draw[fill] (1+3*\i,2*sin{60}*\j) circle (0.1);
     \draw[fill] (1+3*\i+3*cos{60},2*sin{60}*\j+sin{60}) circle (0.1);
  }
}
\draw[->,line width=2pt,blue] (7.5,2.6) -- +(30:1.8);
\draw[->,line width=2pt,blue] (7.5,2.6) -- +(90:1.7);
\draw[fill,black] (7.5,2.6) circle (0.2) node[above=4mm, right] {\Large 
$o$};
\draw[white,thick] (7.5,2.6) circle (0.1);
\draw[fill,green!70!black] (5.5,2.6) circle (0.2) node[below=5mm, left=-0.5mm] {\Large 
$f_3$};
\draw[white,thick] (5.5,2.6) circle (0.1);
\draw[fill,red] (7,1.7) circle (0.2) node[below=5mm, left=-0.5mm] {\Large 
$b$};
\draw[white,thick] (7,1.7) circle (0.1);
\draw[fill,red] (4,3.5) circle (0.2) node[below=5mm, left=-0.5mm] {\Large 
$-b$};
\draw[white,thick] (4,3.5) circle (0.1);
\end{tikzpicture}

\caption{On the left we see in blue the integer grid generated by $(f_1,f_2)$ and the geometric relation between an element $a$ and its inverse. On the right we see in light blue a copy of the blue grid from the left and in red this grid shifted into $f_3$. Also shown on the right is the relation between an element $b$ on the red grid and its inverse. In this sense, $f_3$ is the origin of the red grid.} \label{fig:honeycomb2}
\end{figure}

The inverse of an element $a=\alpha_1f_1+\alpha_2f_2$ on the blue grid is found by usual reflection about the origin $o$: $-a=(-\alpha_1)f_1+(-\alpha_2)f_2$. But the inverse of $b=\beta_1f_1+\beta_2f_2+f_3$ on the red grid is
\[
-b\ =\ -\beta_1f_1-\beta_2f_2-f_3\ =\ (-\beta_1)f_1+(-\beta_2)f_2+f_3,
\]
which is again sitting on the red grid and is given by reflection of $b$ about $f_3$. 

Note that the point $b$ shown in the right part of Figure~\ref{fig:honeycomb2} is exactly the point $h$ from Figure~\ref{fig:honeycomb} that made us wonder about the geometric relation between $h, -h$ and $o$.
\end{example}

The shift operator $V_g$ with $g=\delta_1f_1+\delta_2f_2+\delta_3 (3f_3)$ on $\ell^2(G)$ would then move data by $(\delta_1,\delta_2)$ hexagons in $(f_1,f_2)$-coordinates and rotate its location inside that target hexagon by $\delta_3 f_3$.

\section{Graphs and graph laplacians}
\label{sec:graph_laplacians}

In this section, we review results on graphs and laplacians on discrete graphs, see e.g.\ \cite{KellerLenzWojciechowskiBook}.
Let $G$ be a countable set which we equip with the discrete topology.
Following \cite[Defn.~1.1]{KellerLenzWojciechowskiBook}, $(b,c)$ is called a {\sl graph} over $G$ if $b: G\times G\to [0,\infty)$ is a symmetric function satisfying $b(k,k) = 0$ and $\sum_{\ell\in G} b(k,\ell) < \infty$ for all $k\in G$, and $c:G\to [0,\infty)$.

\textbf{Combinatorial graph distance and balls.}
Two points $k,\ell\in G$ are connected if $b(k,\ell)>0$; in this case we say that there is an edge between $k$ and $\ell$ and write $k\sim \ell$. The \emph{combinatorial graph distance}\footnote{Note that the actual value of a nonzero $b(k,\ell)$ is irrelevant for $d(k,\ell)$, which is just equal to $1$ if $b(k,\ell)\ne 0$.} $d:G\times G\to [0,\infty]$ is then given by 
\[d(k,\ell):= \inf\{n\in\N_0:\; \exists\, i_0,\ldots, i_n\in G: k=i_0\sim i_1\sim\ldots\sim i_n=\ell\},\]
where $\inf\varnothing=\infty$. The graph $(b,c)$ is connected if $d(k,\ell)<\infty$ for all $k,\ell\in G$.
For $k\in G$ and $r\geq 0$ we let $B(k,r):=\{\ell\in G:\; d(k,\ell)\leq r\}$ be the ball around $k$ of radius $r$. 

We say that a graph $(b,0)$ over $G$ has \emph{subexponential growth} if
\[\liminf_{r\to \infty} \inf_{k\in G} \frac{1}{r}\log \frac{\sharp B(k,r)}{\sharp B(k,1)} = 0.\]

\textbf{Energy form and Laplacian.}
A graph $(b,c)$ over $G$ comes with an energy form $\mathcal{Q}:=\mathcal{Q}_{b,c}: \mathcal{D}\times \mathcal{D}\to \R$ with
\begin{align*}
    \mathcal{D} &\ :=\ \Bigl\{x\in C(G):\; \frac{1}{2} \sum_{k,\ell\in G} b(k,\ell)|x_k-x_\ell|^2 + \sum_{k\in G} c(k) |x_k|^2 < \infty\Bigr\},\\
    \mathcal{Q}(x,y) &\ :=\ \frac{1}{2} \sum_{k,\ell\in G} b(k,\ell)(x_k-x_\ell)(y_k - y_\ell) + \sum_{k\in G} c(k) x_k y_k  \quad(x,y\in \mathcal{D}),
\end{align*}
where $C(G)$ is the space of all functions $x: G\to \R$. ($C(G)$ is also the space of continuous functions;  every function on $G$ is continuous as $G$ is discrete.) Moreover, the restriction of $\mathcal{Q}$ to
$D:=\overline{C_c(G)}^{\|\cdot\|_{\mathcal{Q}}}\cap \mathcal{D}$, where $C_c(G)$ is the space of compactly, i.e.\ finitely, supported functions and $\|\cdot\|_{\mathcal{Q}}^2 = \|\cdot\|_2^2 + \mathcal{Q}(\cdot,\cdot)$, yields a form $Q:=Q_{b,c}$ in $\ell^2(G)$ (again we equip $G$ with the counting measure) which provides an associated operator in $\ell^2(G)$, the {\sl Dirichlet Laplacian} $L:=L_{b,c}$ see \cite[p.~109]{KellerLenzWojciechowskiBook}. Note that $L$ is given by
\begin{align*}
    D(L) & = \{x\in D:\; \exists z\in\ell^2(G):\; \mathcal{Q}(x,y) = \langle z,y\rangle \quad(y\in D)\},\\ 
    L x_k & = \sum_{\ell\in G} b(k,\ell) (x_k-x_\ell) + c(k)x_k \quad(x\in D(L), k\in G).
\end{align*}

\textbf{Dirichlet Laplacian on subsets.}
Following \cite[\S1.3]{KellerLenzWojciechowskiBook}, for each finite $W\subseteq G$ we define $Q_W^{(D)}$ on $\ell^2(W)$ by $Q_W^{(D)}(x,y):=\mathcal{Q}(\iota_W x,\iota_W y)$, where $\iota_W: C(W) \to C(G)$ is the canonical embedding, i.e.\ the extension of $x$ by zero outside $W$. Then $Q_W^{(D)} = \mathcal{Q}_{b_W,c_W+d_W}$, where $b_W:=b|_{W\times W}$, $c_W:=c|_W$ and $d_W(k):=\sum_{\ell\in G\setminus W} b(k,\ell)$ for $k\in W$.
The non-negative self-adjoint operator $L_W^{(D)}$ associated with $Q_W^{(D)}$ is the {\sl Dirichlet Laplacian with respect to $W$}; see \cite[p.~118]{KellerLenzWojciechowskiBook}. We have
\begin{align*}
    D(L_W^{(D)}) & = \ell^2(W),\\ 
    L_W^{(D)} x_k & = \sum_{\ell\in W} b(k,\ell) (x_k-x_\ell) + \Bigl(c(k)+\sum_{\ell\in G\setminus W} b(k,\ell)\Bigr)x_k \quad(x\in \ell^2(W), k\in W).
\end{align*}

\section{Main results and proofs} \label{sec:main}

Throughout this section we assume that $(G,+)$ is a countable Abelian group and equip $G$ with the discrete topology. Let $Y$ be a Hilbert space and set $E:= \ell^2(G,Y)$ (i.e.\ we use the counting measure on $G$). We specialise to particular classes of Abelian groups in the next section.

The key steps are to show \eqref{eq:patchAx_intro} and to minimise the truncation penalty as a function of its parameters; the rest are fairly straightforward conclusions. We will do things step by step and for each of the methods $\tau_1$ and $\tau$. First some final preparations:

We start with a finite set $W\subset G$ that, when shifted around $G$, will serve as our truncation window. For example, in $G=\Z^d$ one may work with $W=\{1,\dots,n\}^d$ for $n\in\N$. 

\medskip\noindent
{\bf Convention.\ } To keep notations as simple as possible, let us hide the parameters $W$ and $n$ for most of the following computations. They will come back when we study the asymptotics of the truncation penalty as we blow up $W$ in an appropriate way.

With this convention, our patches $A_{n,k}$ and $x_{n,k}$ from the introduction become  $A_k$ and $\tilde x_k$, respectively\footnote{The tilde in this notation helps us to tell the vector patch $\tilde x_k$ from the vector entry $x_i$.}.

\medskip\noindent
{\bf The operator patches $A_k$.\ }
For $k\in G$, let $P_{k}$ denote the operator of multiplication by the characteristic function of the set $k+W$. Then the $k$th local patch $A_k$ of our band operator $A$ on $\ell^2(G,Y)$ is defined by
\begin{equation} \label{eq:patchA_tau1}
\ A_k^{(\tau_1)}\ :=\ AP_k
\end{equation}
in case of the $\tau_1$ method (one-sided truncation), and it is
\begin{equation} \label{eq:patchA_tau}
\ A_k^{(\tau)}\ :=\ P_kAP_k
\end{equation}
for the $\tau$ method (usual finite sections, two-sided truncation). Both operators are understood as acting on the 
space $\im P_k=\ell^2(k+W,Y)$. (Again, we may also use the symbol $P_k$ for $k\in G$ for the corresponding operator on $\ell^2(G)$.)


\medskip\noindent
{\bf The vector patches $\tilde x_k$: soft truncation and weights.\ }
Deriving the vector patches $\tilde x_k$ from $x$ by a ``sharp'' truncation (application of $P_k$, i.e.~multiplication by the characteristic function of $k+W$) will lead to a larger truncation penalty $\eps_n$ in \eqref{eq:patchAx_intro} than using a somewhat softer way to cut off: Take a nonzero weight function $w\in \ell^2(G)$ with $\supp w \subset W$ and compute the vector patches
\begin{equation} \label{eq:patchx}
\tilde x_k\ :=\ M_{V_kw}x,\quad\text{so that}\quad (\tilde x_k)_i\ =\ w_{i-k}\cdot x_i,\quad\text{i.e.}\quad (\tilde x_k)_{k+m}\ =\ w_m\cdot x_{k+m}
\end{equation}
for $k,i,m\in G$. 

The proof of \eqref{eq:patchAx_intro} needs to know $\sum_{k\in G} \|\tilde x_k\|^2$, which is the following simple computation:
\begin{equation}\label{eq:lem-trick1}
\sum_{k\in G} \|\tilde x_k\|^2\, =\, \sum_{k\in G}\|M_{V_kw}x\|^2\, =\, \sum_{k\in G}\sum_{i\in G}\|w_{i-k}x_i\|^2\, =\, \sum_{i\in G}\|x_i\|^2\sum_{k\in G}|w_{i-k}|^2\, =\, \|x\|_2^2\,\|w\|_2^2,
\end{equation}
where we use that $G$ is a group. (Note how \eqref{eq:lem-trick1} holds, e.g., for $G=\Z$ but not for $G=\N$.)

The case of the ``sharp'' truncation is still part of the consideration by putting $w=\chi_W$, the characteristic function of the set $W$, and we will later minimise the penalty $\eps_n$ from \eqref{eq:patchAx_intro} via variation of the weight function $w$. From our studies in \cite{CW.Heng.ML:JST} with $G=\Z$ we have learnt that the characteristic function $w=\chi_W$ (sharp cut-off) leads to $\eps_n\sim 1/\sqrt n$, while a hat-shaped or a $\sin$-shaped weight $w$ leads to $\eps_n\sim 1/n$, the latter proven to be optimal.

\subsection{The \texorpdfstring{$\tau_1$}{tau1} method: one-sided truncations}

In this subsection on the $\tau_1$ method we use the abbreviation $A_k := A_k^{(\tau_1)}$, for $k\in G$. 

\subsubsection{The $\tau_1$ version of \eqref{eq:patchAx_intro} with weight-dependent truncation penalty}
We first derive our $\tau_1$ version of \eqref{eq:patchAx_intro}, where the truncation penalty still depends on the choice of the weight function $w$, giving us potential for optimisation below.

\begin{proposition} \label{prop:tau1}
Let $A\in \W(E)$, in which case $A$ has the representation \eqref{eq:AW},
for some sequence $(b^{(j)})_{j\in G} \subset \ell^\infty(G,L(Y))$ with $\|A\|_\W <\infty$, and 
suppose a finite set $W\subset G$ and  
 $w\in \ell^2(G)$ with $w\neq 0$ and $\supp w\subset W$ 
 are given. Then, for every nonzero $x\in E$, there exists a $k\in G$ such that $\tilde x_k\ne 0$ and
\begin{equation} \label{eq:patchAx_tau1}
\frac{\|A_{k}\, \tilde x_{k}\|}{\| \tilde x_{k}\|}\ \le\ \frac{\|Ax\|}{\|x\|}\ +\ \eps(w,A),
\end{equation}
where
\begin{equation} \label{eq:pen_tau1}
\eps(w,A)\ :=\ \eps^{(\tau_1)}(w,A)\ :=\ \sum_{j\in G} \|b^{(j)}\|_\infty \frac{\|V_jw-w\|_2}{\|w\|_2}
\end{equation}
and where the local patches $A_k$ and $\tilde x_k$ are defined by \eqref{eq:patchA_tau1} and \eqref{eq:patchx} above.
\end{proposition}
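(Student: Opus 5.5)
The plan is the standard averaging argument. Since $\supp \tilde x_k\subset k+W$, we have $A_k\tilde x_k = AP_k\tilde x_k = A\tilde x_k = AM_{V_kw}x$. I will write this as $M_{V_kw}Ax + [A,M_{V_kw}]x$ and control the commutator in square-summed form over $k\in G$. The aim is the inequality
\[
\Big(\sum_{k\in G}\|A\tilde x_k\|^2\Big)^{1/2}\ \le\ \big(\|Ax\|\,\|w\|_2 + \eps(w,A)\,\|w\|_2\,\|x\|\big).
\]
Combining this with \eqref{eq:lem-trick1}, which gives $\sum_k\|\tilde x_k\|^2=\|x\|^2\|w\|_2^2$, we obtain
\[
\sum_k\|A\tilde x_k\|^2\le \Big(\frac{\|Ax\|}{\|x\|}+\eps(w,A)\Big)^2\sum_k\|\tilde x_k\|^2 .
\]
If every $k$ with $\tilde x_k\neq0$ violated \eqref{eq:patchAx_tau1}, then summing would give strict inequality the other way. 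The sum $\sum_k\|\tilde x_k\|^2$ is positive, so some $k$ has $\tilde x_k\ne0$, and the contradiction is genuine. So some $k$ works.

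For the commutator, I will first handle a single term $M_bV_j$ with $b=b^{(j)}$. Since multiplication operators commute, $M_{V_kw}M_bV_j = M_bM_{V_kw}V_j$. Moreover $V_jM_{u}=M_{V_ju}V_j$, so $M_bV_jM_{V_kw} = M_bM_{V_{k+j}w}V_j$. Hence
\[
[M_bV_j,M_{V_kw}]x = M_b M_{V_k(V_jw-w)}V_jx .
\]
Applying \eqref{eq:lem-trick1} with the weight $V_jw-w$ and the vector $V_jx$ (which has norm $\|x\|$) gives
\[
\Big(\sum_k\|[M_bV_j,M_{V_kw}]x\|^2\Big)^{1/2}\le \|b\|_\infty\,\|V_jw-w\|_2\,\|x\|.
\]
Note that \eqref{eq:lem-trick1} uses only $w\in\ell^2(G)$, not the finite support.

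For general $A\in\W(E)$, I will sum over $j$ using Minkowski's inequality in $\ell^2(G;E)$, viewing $k\mapsto(\cdot)_k$ as a vector. This gives
\[
\Big(\sum_k\|[A,M_{V_kw}]x\|^2\Big)^{1/2}\le \sum_j\|b^{(j)}\|_\infty\|V_jw-w\|_2\|x\| = \eps(w,A)\|w\|_2\|x\|.
\]
Similarly, \eqref{eq:lem-trick1} applied to $Ax$ gives $\big(\sum_k\|M_{V_kw}Ax\|^2\big)^{1/2}=\|w\|_2\|Ax\|$. A final Minkowski step combines the two pieces into the aim inequality.

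The main obstacle is justifying the interchange of the infinite sum over $j$ with the commutator in the Wiener-algebra case. The series $\sum_j M_{b^{(j)}}V_j$ converges absolutely in operator norm, and the right-hand bound is finite because $\|V_jw-w\|_2\le2\|w\|_2$. So I expect to apply the Minkowski estimate to finite partial sums, or equivalently to use the triangle inequality in $\ell^2(G;E)$ for countable sums, and pass to the limit.
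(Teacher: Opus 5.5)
Your proposal is correct and follows essentially the same route as the paper. It uses the same splitting $A_k\tilde x_k = M_{V_kw}Ax + [A,M_{V_kw}]x$, Minkowski over $j$ and $k$, \eqref{eq:lem-trick1} for both pieces, and the same averaging conclusion; your identity $[M_bV_j,M_{V_kw}] = M_bM_{V_k(V_jw-w)}V_j$ is a compact operator-level form of the paper's entrywise computation leading to \eqref{eq:RHS2}, and your justification of the infinite $j$-sum makes explicit a step the paper leaves implicit.
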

\begin{proof}
Take a nonzero $x\in E$. Recalling the commutator notation $[A,B]:=AB-BA$ for bounded operators $A,B$, we have, for every $k\in G$,
\begin{eqnarray*}
A_k\tilde x_k &=& AP_kM_{V_kw}x \ =\ AM_{V_kw}x \ =\ M_{V_kw}Ax\ +\ [A,M_{V_kw}]x\\
&=& M_{V_kw}Ax\ +\ \left[\sum_{j\in G} M_{b^{(j)}}V_j\ ,\ M_{V_kw}\right]x
\ =\ M_{V_kw}Ax\ +\ \sum_{j\in G} [M_{b^{(j)}}V_j,M_{V_kw}]x,
\end{eqnarray*}
so that
\[
\|A_k\tilde x_k\| \ \le\ \|M_{V_kw}Ax\|\ +\ \sum_{j\in G} \|[M_{b^{(j)}}V_j,M_{V_kw}]x\|.
\]
Applying squares, summation over all $k\in G$ and a square root, it follows that
\begin{eqnarray}
\sqrt{\sum_{k\in G} \|A_k\tilde x_k\|^2} &\le&
\sqrt{\sum_{k\in G}\left(\underbrace{\|M_{V_kw}Ax\|}_{=:\ \xi_k}\ +\ \sum_{j\in G} \underbrace{\|[M_{b^{(j)}}V_j,M_{V_kw}]x\|}_{=:\ \eta_{j,k}}\right)^2}\nonumber\\ 
&=& \sqrt{\sum_{k\in G}(\xi_k+\sum_{j\in G} \eta_{j,k})^2}\ 
\ \stackrel{(M)}\le
\ \sqrt{\sum_{k\in G} \xi_k^2}+\sum_{j\in J}\sqrt{\sum_{k\in G}\eta_{j,k}^2}\nonumber\\
&=& \sqrt{\sum_{k\in G}\|M_{V_kw}Ax\|^2}\ +\ \sum_{j\in G}\sqrt{\sum_{k\in G}\|[M_{b^{(j)}}V_j,M_{V_kw}]x\|^2} \label{eq:tag1}
\end{eqnarray}
with $(M)$ denoting an application of Minkowski's inequality.

Now we evaluate the right-hand side of \eqref{eq:tag1}. First, by \eqref{eq:lem-trick1},
\begin{equation} \label{eq:trick1}
\sqrt{\sum_{k\in G}\|M_{V_kw}Ax\|^2}=\sqrt{\|w\|_2^2\|Ax\|^2}=\|w\|_2\|Ax\|.
\end{equation}
%
Further, for $j,k\in G$,
\begin{eqnarray*}
 \|[M_{b^{(j)}}V_j,M_{V_kw}]x\|^2 &=& \sum_{i\in G}|(M_{b^{(j)}}V_jM_{V_kw}x)_i-(M_{V_kw}M_{b^{(j)}}V_jx)_i|^2\\
&=& \sum_{i\in G}\|b^{(j)}_i \cdot w_{i-j-k}\cdot x_{i-j}-w_{i-k}\cdot b^{(j)}_i\cdot x_{i-j}\|^2\\
&\le& \|b^{(j)}\|_\infty^2\sum_{i\in G} |w_{i-j-k}-w_{i-k}|^2\|x_{i-j}\|^2.
\end{eqnarray*}
Substituting\footnote{Note that we use commutativity of the group operation when we turn $i-j-k$ into $i-k-j=m-j$.} $m:=i-k$, summing up over $k\in G$, and arguing as in \eqref{eq:lem-trick1},
\begin{eqnarray}
\sum_{k\in G} \|[M_{b^{(j)}}V_j,M_{V_kw}]x\|^2 &\le& \|b^{(j)}\|_\infty^2 \sum_{k\in G} \sum_{m\in G} |w_{m-j}-w_{m}|^2\|x_{m+k-j}\|^2 \nonumber\\
&=& \|b^{(j)}\|_\infty^2 \sum_{m\in G} |w_{m-j}-w_{m}|^2\sum_{k\in G} \|x_{m+k-j}\|^2\nonumber\\
&=& \|b^{(j)}\|_\infty^2 \|V_jw-w\|_2^2\|x\|_2^2.\label{eq:RHS2}
\end{eqnarray}
  
So from \eqref{eq:tag1} and our bounds \eqref{eq:trick1} and \eqref{eq:RHS2} on its right-hand side we conclude
\begin{eqnarray}
\sqrt{\sum_{k\in G}\|A_k\tilde x_k\|^2} &\le& \|w\|_2\|Ax\| + \sum_{j\in J} \|b^{(j)}\|_\infty \|V_jw-w\|_2\|x\|\nonumber\\
&=& \underbrace{\left(\frac{\|Ax\|}{\|x\|} + \sum_{j\in J} \|b^{(j)}\|_\infty \frac{\|V_jw-w\|_2}{\|w\|_2}\right)}_{=:\,c}\|w\|_2\,\|x\|.\label{eq:tag2}
\end{eqnarray}
By \eqref{eq:lem-trick1} again, 
\[
\sum_{k\in G}\|\tilde x_k||^2\ =\ \sum_{k\in G}\|M_{V_kw}x||^2\ =\ \|w\|_2^2\,\|x\|^2,
\]
so that \eqref{eq:tag2} turns into
\[
\sqrt{\sum_{k\in G}\|A_k\tilde x_k\|^2} \ \le\ c\sqrt{\sum_{k\in G}\|\tilde x_k||^2}
\qquad\text{and hence}\qquad
\sum_{k\in G}\|A_k\tilde x_k\|^2 \ \le\ c^2\sum_{k\in G}\|\tilde x_k||^2.
\]
We conclude that, with $c$ as defined in \eqref{eq:tag2}, either i) $\|A_k\tilde x_k\|^2\le c^2\|\tilde x_k\|^2$ for all $k\in G$, where $\tilde x_k\ne 0$ for at least one of them or ii) $\|A_k\tilde x_k\|^2> c^2\|\tilde x_k\|^2$ for some $k$ but $\|A_k\tilde x_k\|^2< c^2\|\tilde x_k\|^2$ for others. Also then $\tilde x_k\ne 0$ for the latter. It remains to take the square root and divide by $\|\tilde x_k\|$.
\end{proof}

\begin{remark}
We can see that the truncation penalty $\eps(w,A)$ comes entirely from the commutator term $[A,M_{V_kw}]x$ in the first line of the argument. If $G=\Z^d$ and if our weight function $w$ comes from $\Z^d$-evaluations of a compactly supported continuous function $\ph:\R^d\to\R$, whose argument is getting stetched out, meaning that $w=\ph_t|_{\Z^d}$, where $\ph_t(x)=\ph(\frac xt)$ for $t>1$, then, by \cite[Thm~1.42]{LiBook}, we have $\|[A,M_{V_kw}]\|\to 0$ as $t\to\infty$. This is a first hint towards $\eps(w,A)\to 0$ as $t\to\infty$ if $\ph$ is continuous with compact support and $w=\ph_t|_{\Z^d}$.
\end{remark}

\subsubsection{Minimising the truncation penalty by variation of the weight function $w$} \label{sec:min_tau1}

We now seek to find a computable bound for the minimal value of $\varepsilon(w,A)$, given by \eqref{eq:pen_tau1}, where we minimise over the set of all nonzero $w\in \ell^2(G)$ supported in the finite set $W$.

In the following proposition we use the notation $\W_p(E)$ defined in \eqref{eq:Wpdef}, and recall that we define $0^p:= 0$ for $p=0$. The assumption $A\in \W_p(E)\subset \W(E)$ in this proposition is crucial and guarantees: 
(i) that we can apply Proposition \ref{prop:tau1} to bound $\eps(w,A)$; 
(ii) that $\sum_{j\in G} (\|b^{(j)}\|_\infty +\|b^{(-j)}\|_\infty)^{2-p}$ is finite; 
(iii) that the graph $(b,c)$ we define in the proposition is well defined, in particular that $\sum_{\ell\in G} b(k,\ell) < \infty$ for all $k\in G$. 
Note also that $\eps_p(W,A)$, as defined in this proposition, depends on the operator $A$, the choice of the window $W$, and also on the choice of $p$; if $A\in \W_q(E)$, with $0\leq q<1$, then $A\in \W_p(E)$ for $q\leq p\leq 1$ so \eqref{eq:Weps} holds for $q\leq p\leq 1$. As we note in Remark \ref{rem:p01} below, the statement of this proposition simplifies in the important special cases $p=0$ and $1$, when $\W_p(E) = BO(E)$ and $\W(E)$, respectively.

\begin{proposition}
\label{prop:min_eps_tau1}
Suppose, for some $0\leq p\leq 1$,  that $A\in \W_p(E)\subset \W(E)$, in which case $A$ has the representation \eqref{eq:AW},
for some sequence $(b^{(j)})_{j\in G} \subset \ell^\infty(G,L(Y))$ with $w_p(A) <\infty$, and let $W\subset G$ finite. Let $(b,c)$ be the graph over $G$ defined by
\[
b(k,\ell) := 2\Bigl(\frac{\|b^{(k-\ell)}\|_\infty + \|b^{(l-k)}\|_\infty}{2}\Bigr)^p \chi_{G\setminus\{0\}} (k-l)
\quad (k,\ell\in G),
\] 
and $c(k):= 0$ for $k\in G$.
Then
\begin{align}
\nonumber
\inf_{\begin{array}{c}{\scriptstyle w\in\ell^2(G)\setminus\{0\},}\\{\scriptstyle \supp w\subset W}\end{array}}\varepsilon(w,A) 
&\ \leq\ \sqrt{\sum_{j\in G\setminus\{0\}} \Bigl(\frac{\|b^{(j)}\|_\infty + \|b^{(-j)}\|_\infty}{2}\Bigr)^{2-p}} \sqrt{\min \Spec L_W^{(D)}}\\
\label{eq:Weps}
&\ =:\ \eps_p^{(\tau_1)}(W,A)\ =:\ \eps_p(W,A),
\end{align}
where $L_W^{(D)}$ is the Dirichlet Laplacian with respect to $W$ for $(b,c)$, so that $\min \Spec L_W^{(D)}>0$.
\end{proposition}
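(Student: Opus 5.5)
Fix a nonzero real $w$ with $\supp w\subset W$ and write $\beta_j:=\|b^{(j)}\|_\infty$ and $\gamma_j:=\tfrac12(\beta_j+\beta_{-j})$. The plan is to bound $\eps(w,A)\|w\|_2$ from \eqref{eq:pen_tau1} by a Cauchy--Schwarz split, recognise the result as the energy form of the graph $(b,0)$ evaluated on $w$, and then minimise over $w$ via the Rayleigh quotient of $L_W^{(D)}$.

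First we symmetrise. Since $\|V_jw-w\|_2=\|V_{-j}w-w\|_2$ (shift by $-j$ and use the group structure) and the $j=0$ term vanishes,
\[
\eps(w,A)\|w\|_2=\sum_{j\ne 0}\beta_j\|V_jw-w\|_2=\sum_{j\ne0}\gamma_j\|V_jw-w\|_2 .
\]
Writing $\gamma_j=\gamma_j^{1-p/2}\gamma_j^{p/2}$ and applying Cauchy--Schwarz gives
\[
\eps(w,A)\|w\|_2\le\Big(\sum_{j\ne0}\gamma_j^{2-p}\Big)^{1/2}\Big(\sum_{j\ne0}\gamma_j^{p}\|V_jw-w\|_2^2\Big)^{1/2}.
\]
The first factor is finite: $\gamma_j\le\|A\|_\W$ and $2-p\ge p$, so $\gamma_j^{2-p}\le C\gamma_j^p$, and moreover $\gamma_j^p\le\beta_j^p+\beta_{-j}^p$ by concavity or subadditivity, so $\sum_j\gamma_j^p\le 2w_p(A)<\infty$. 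For $p=0$ the convention $0^0=0$ makes the terms indicator functions, and these bounds still hold.

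Next we compute the second factor. Substituting $\ell=k-j$,
\[
\sum_{j\ne0}\gamma_j^p\sum_k|w_{k-j}-w_k|^2=\sum_{k\ne\ell}\gamma_{k-\ell}^p|w_k-w_\ell|^2=\tfrac12\sum_{k,\ell}b(k,\ell)|w_k-w_\ell|^2=\mathcal Q(w,w).
\]
The last two equalities use $b(k,\ell)=2\gamma_{k-\ell}^p$ for $k\ne\ell$, which is symmetric because $\gamma_j=\gamma_{-j}$. Since $\supp w\subset W$, we have $\mathcal Q(w,w)=Q_W^{(D)}(w|_W,w|_W)=\langle L_W^{(D)}w,w\rangle$. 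The graph is well defined because $\sum_\ell b(k,\ell)=2\sum_{j\ne0}\gamma_j^p<\infty$. Hence $\eps(w,A)\le(\sum\gamma_j^{2-p})^{1/2}\big(\langle L_W^{(D)}w,w\rangle/\|w\|_2^2\big)^{1/2}$. Because $W$ is finite, $L_W^{(D)}$ is a self-adjoint matrix, so taking $w$ to be a real eigenvector for its smallest eigenvalue gives \eqref{eq:Weps}. The restriction to real $w$ is harmless since $L_W^{(D)}$ is a real symmetric matrix.

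The main obstacle is the final claim $\min\Spec L_W^{(D)}>0$. Suppose $\langle L_W^{(D)}w,w\rangle=0$. Then $w_k=w_\ell$ whenever $b(k,\ell)>0$, and the boundary term $\sum_{k\in W}d_W(k)|w_k|^2$ vanishes. If some edge leaves $W$, which happens whenever $A$ has a nonzero off-diagonal $b^{(j)}$ and $G$ is infinite (since $W+j\not\subset W$ for finite $W$), then $w$ is constant on connected components and zero on any component meeting the boundary. Every component of the graph restricted to $W$ does meet the boundary: the component of $k$ contains $k+nj$ for all $n$ as long as these stay in $W$, and this orbit must exit the finite set $W$ unless $j$ has finite order. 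So $w=0$. I expect this to need a mild implicit hypothesis, namely that $A$ is not diagonal and that $G$ has an element of infinite order among the $j$ with $b^{(j)}\ne0$ relative to $W$. In the degenerate case the right-hand side is $0$ anyway, the bound holds trivially, and I would simply note it.
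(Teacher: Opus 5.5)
Your proof is correct and follows the paper's route. You symmetrise using $\|V_jw-w\|_2=\|V_{-j}w-w\|_2$, split $\gamma_j=\gamma_j^{1-p/2}\gamma_j^{p/2}$ in Cauchy--Schwarz, identify $\sum_{j\ne0}\gamma_j^p\|V_jw-w\|_2^2$ with the quadratic form of $L_W^{(D)}$, and minimise the Rayleigh quotient; the only difference is that you make the identification directly through $\mathcal Q$, whereas the paper uses the operator $P_0\sum_{j\ne0}\gamma_j^p(2I-V_j-V_{-j})P_0$. Your caveat on $\min\Spec L_W^{(D)}>0$ is justified: the paper asserts it without argument, and it fails exactly when $W$ contains a coset of the subgroup generated by $\{j\ne0:\gamma_j\ne0\}$ (e.g.\ for diagonal $A$, or for $W=G$ finite), although your parenthetical that $W+j\not\subset W$ for every finite $W$ in infinite $G$ is false for $j$ of finite order; the inequality itself is unaffected.
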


\begin{proof} For $j\in G$ we define $B_j:= (V_j-I)P_0$. Let $w\in \ell^2(G)$ with $w\neq 0$ and $\supp w\subset W$. For $j\in G$ the shift $V_{-j}$ is isometric; hence, $\|B_jw\|_2 = \|V_{-j}B_j w\|_2 = \|B_{-j}w\|_2$.
By \eqref{eq:pen_tau1}, we thus obtain with an application of the Cauchy--Schwarz inequality,
\begin{align*}
\eps(w,A)\  & =\ \sum_{j\in G\setminus\{0\}} \|b^{(j)}\|_\infty \frac{\|B_j w\|_2}{\|w\|_2}
 =\ \sum_{j\in G\setminus\{0\}} \Bigl(\frac{\|b^{(j)}\|_\infty + \|b^{(-j)}\|_\infty}{2}\Bigr)^{1-p/2+p/2} \frac{\|B_j w\|_2}{\|w\|_2} \\
 & \leq\ \sqrt{\sum_{j\in G\setminus\{0\}} \Bigl(\frac{\|b^{(j)}\|_\infty + \|b^{(-j)}\|_\infty}{2}\Bigr)^{2-p}} \sqrt{\sum_{j\in G\setminus\{0\}}  \Bigl(\frac{\|b^{(j)}\|_\infty + \|b^{(-j)}\|_\infty}{2}\Bigr)^p\frac{\|B_jw\|_2^2}{\|w\|_2^2}}.
\end{align*}
Now, for $j\in G\setminus\{0\}$,
$$
\|B_jw\|_2^2\ =\ \langle B_j^HB_j w,w\rangle,
$$
where $B_j^H = P_0(V_{-j}-I)$ is the Hilbert space adjoint of $B_j$. Hence,
\[\sum_{j\in G\setminus\{0\}}  \Bigl(\frac{\|b^{(j)}\|_\infty + \|b^{(-j)}\|_\infty}{2}\Bigr)^p\|B_jw\|_2^2 = \Bigl\langle \!\sum_{j\in G\setminus\{0\}}  \Bigl(\frac{\|b^{(j)}\|_\infty + \|b^{(-j)}\|_\infty}{2}\Bigr)^p B_j^H B_jw,w \Bigr\rangle = \langle Bw, w\rangle,
\]
where
\begin{align*}\
B &\ :=\ \sum_{j\in G\setminus\{0\}} \Bigl(\frac{\|b^{(j)}\|_\infty + \|b^{(-j)}\|_\infty}{2}\Bigr)^p B_j^HB_j\\
&\ =\ P_0\left(\sum_{j\in G\setminus\{0\}} \Bigl(\frac{\|b^{(j)}\|_\infty + \|b^{(-j)}\|_\infty}{2}\Bigr)^p(2I-V_j-V_{-j})\right)P_0.
\end{align*}
Thus
$$
\eps(w,A)\ \leq \ \sqrt{\sum_{j\in G\setminus\{0\}} \Bigl(\frac{\|b^{(j)}\|_\infty + \|b^{(-j)}\|_\infty}{2}\Bigr)^{2-p}}  \sqrt{\frac{\langle Bw, w\rangle}{\|w\|_2^2}}.
$$
Further, a short calculation reveals that $B$ restricted to $\im(P_0)=\ell^2(W)$ coincides with $L_W^{(D)}$. As 
\[\frac{\langle Bw,w\rangle}{\|w\|_2^2}\]
is the Rayleigh quotient of $B=L_W^{(D)}$ for the vector $w$, we obtain that it is minimised by the ground state $w\neq 0$ of $L_{W}^{(D)}$, with its minimal value being $\min \Spec L_{W}^{(D)}>0$, the minimal eigenvalue.
\end{proof}

\begin{remark} \label{rem:p01}
Let us comment on the special situations $p=0$ and $p=1$.
\begin{enumerate}
 \item[(a)]
 For $p=0$ we have $\W_0(E)=BO(E)$, and the above proposition simplifies, with $J:=\{j\in G:\; \|b^{(j)}\|_\infty + \|b^{(-j)}\|_\infty \neq 0\}$,
$$
\eps_0(W,A)\ =\ \frac{1}{2}\sqrt{\sum_{j\in J\setminus\{0\}} \Bigl(\|b^{(j)}\|_\infty + \|b^{(-j)}\|_\infty\Bigr)^2} \sqrt{\min \Spec L_W^{(D)}}.
$$
\item[(b)] 
For $p=1$ we have $\W_p(E)=\W(E)$, and the above proposition simplifies to
$$
\eps_1(W,A)\ =\ \sqrt{\sum_{j\in G\setminus\{0\}} \|b^{(j)}\|_\infty} \sqrt{\min \Spec L_W^{(D)}}.
$$
\end{enumerate}
\end{remark}


\subsubsection{Upper and lower bounds on the lower norm and the pseudospectrum} \label{sec:bounds_tau1}

\begin{corollary}[\bf Lower $\tau_1$ bound on the lower norm (weight-dependent)] \label{cor:tau1}
Under the assumptions and with the notations of Proposition \ref{prop:tau1}, it holds that
\[
\inf_{k\in G}\nu(A_k)\ \le\ \nu(A)\ +\ \eps(w,A),
\]
where we recall that $A_k=A_k^{(\tau_1)}=AP_k$ acts from $\im P_k=\ell^2(k+W,Y)$ to $\ell^2(G,Y)$ for each $k\in G$.
\end{corollary}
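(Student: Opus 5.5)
The plan is to apply Proposition \ref{prop:tau1} to a near-minimiser of $\|Ax\|/\|x\|$ and then pass to the infimum. Fix $\delta>0$. By the definition of $\nu(A)=\nu_G(A)$ as an infimum over unit vectors, we pick a nonzero $x\in E$ with $\|Ax\|/\|x\|\le\nu(A)+\delta$.

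Next we invoke Proposition \ref{prop:tau1} for this $x$. It gives a $k\in G$ with $\tilde x_k\ne0$ and
\[
\frac{\|A_k\tilde x_k\|}{\|\tilde x_k\|}\ \le\ \frac{\|Ax\|}{\|x\|}+\eps(w,A)\ \le\ \nu(A)+\delta+\eps(w,A).
\]
We then note that $\tilde x_k=M_{V_kw}x$ satisfies $(\tilde x_k)_i=w_{i-k}x_i$. Since $\supp w\subset W$, we get $\supp\tilde x_k\subset k+W$, so $\tilde x_k$ lies in $\im P_k=\ell^2(k+W,Y)$, which is the domain of $A_k$. The normalised vector $\tilde x_k/\|\tilde x_k\|$ is therefore admissible in the definition of $\nu(A_k)$ (that is, $\nu_{k+W}$ of $A_k$ as a map $\ell^2(k+W,Y)\to\ell^2(G,Y)$). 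This yields
\[
\nu(A_k)\ \le\ \frac{\|A_k\tilde x_k\|}{\|\tilde x_k\|}.
\]

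Finally, we bound the infimum over all $k\in G$ by the value at this particular $k$:
\[
\inf_{k\in G}\nu(A_k)\ \le\ \nu(A)+\eps(w,A)+\delta.
\]
Since $\delta>0$ is arbitrary, the claim follows.

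There is no real obstacle here. The only point needing care is checking that $\tilde x_k$ genuinely belongs to the domain space of $A_k$, which follows from the support condition on $w$. We also do not need to treat $\nu(A)=0$ separately: the $\delta$-argument covers that case without change.
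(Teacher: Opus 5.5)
Your proof is correct and follows the paper's argument: apply Proposition~\ref{prop:tau1}, bound $\nu(A_k)$ by $\|A_k\tilde x_k\|/\|\tilde x_k\|$, and pass to the infima over $k$ and $x$. Your $\delta$-near-minimiser is just an equivalent way of taking the infimum over $x$, and your explicit check that $\supp\tilde x_k\subset k+W$ fills in a detail the paper leaves implicit.
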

\begin{proof}
For all $x\in \ell^2(G,Y)\setminus\{0\}$ and the corresponding $k\in G$ in \eqref{eq:patchAx_tau1}, we have
\[
\nu(A_k)\ \le\ \frac{\|A_{k}\, \tilde x_{k}\|}{\| \tilde x_{k}\|}\ \le\ \frac{\|Ax\|}{\|x\|}\ +\ \eps(w,A).
\]
Now ignore the middle term and pass to $\inf_{k\in G}$ and to $\inf_{x\in \ell^2(G,Y)\setminus\{0\}}$, to arrive at our claim.
\end{proof}

As a special feature of the $\tau_1$ method we also have this very natural upper bound on $\nu(A)$:

\begin{lemma}[\bf Upper $\tau_1$ bound on the lower norm] \label{lem:upper_tau1}
For every bounded linear operator $A$ on $\ell^2(G,Y)$ and every $k\in G$ it holds that
\[
\nu(A)\ \le\ \nu(A_k).
\]
\end{lemma}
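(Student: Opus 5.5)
This is a direct consequence of the monotonicity property \eqref{eq:numon} of the lower norm. The plan is to identify $\nu(A_k)$ with a restricted lower norm of $A$ itself, and then compare index sets.

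First, I unpack the definition. Since $A_k=A_k^{(\tau_1)}=AP_k$ is understood as acting from $\im P_k=\ell^2(k+W,Y)$ into $\ell^2(G,Y)$, we have
\[
\nu(A_k)\ =\ \inf\{\|AP_kx\|:\ x\in\ell^2(k+W,Y),\ \|x\|=1\}.
\]
Embedding $\ell^2(k+W,Y)$ into $\ell^2(G,Y)$ by extension by zero identifies each such $x$ with an element of $\ell^2(G,Y)$ supported in $k+W$. On such elements $P_kx=x$, so $AP_kx=Ax$. Hence
\[
\nu(A_k)\ =\ \inf\{\|Ax\|:\ \|x\|=1,\ \supp x\subset k+W\}\ =\ \nu_{k+W}(A),
\]
with $A$ viewed as an operator $\ell^2(G,Y)\to\ell^2(G,Y)$.

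Second, I apply \eqref{eq:numon} with $T=k+W\subset U=G=\I$. This gives $\nu_{k+W}(A)\ge\nu_G(A)=\nu(A)$. The reason is simply that the infimum defining $\nu(A)$ runs over a larger set of unit vectors.

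The only point needing care is the identification in the first step, namely that extension by zero is an isometry and that the norm of $Ax$ is the same whether $A_k$ maps into $\ell^2(G,Y)$ or into $\ell^2(k+W+J,Y)$ in the band case. That is immediate here, because the codomain of $A_k$ is taken to be all of $\ell^2(G,Y)$. No other obstacle arises, and the result holds for every bounded $A$, with no band structure needed.
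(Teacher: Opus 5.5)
Your proof is correct and follows the paper's own argument. The paper likewise notes $A_k=(AP_k)|_{\im P_k}=A|_{\ell^2(k+W,Y)}$, so $\nu(A_k)=\nu_{k+W}(A)\ge\nu_G(A)=\nu(A)$ by \eqref{eq:numon}.
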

\begin{proof}
By $A_k=(AP_k)|_{\im P_k}=A|_{\im P_k}=A|_{\ell^2(k+W,Y)}$, we have $\nu(A_k)=\nu_{k+W}(A)\ge\nu_G(A)=\nu(A)$ for all $k\in G$, by \eqref{eq:numon}.
\end{proof}

So we can complement the lower bound from Corollary \ref{cor:tau1} as follows:
\begin{corollary}[\bf Upper and lower $\tau_1$ bounds on the lower norm (weight-dependent)] \label{cor:tau1_sandwich_w}
Under the assumptions and with the notations of Proposition \ref{prop:tau1}, it holds that
\[
\inf_{k\in G}\nu(A_k)\ -\ \eps(w,A)\ \le\ \nu(A)\ \le\ \inf_{k\in G}\nu(A_k).
\]
\end{corollary}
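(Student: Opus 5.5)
This corollary is just the combination of the two one-sided estimates already established, so the plan is to cite them and rearrange. The proof will have two steps, one for each inequality in the sandwich.

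For the left-hand inequality, I invoke Corollary \ref{cor:tau1}. Under exactly the assumptions of Proposition \ref{prop:tau1}, it gives $\inf_{k\in G}\nu(A_k)\le \nu(A)+\eps(w,A)$. Subtracting the finite number $\eps(w,A)$ from both sides yields $\inf_{k\in G}\nu(A_k)-\eps(w,A)\le\nu(A)$. Finiteness holds because $\eps(w,A)\le 2\|A\|_\W$: each quotient $\|V_jw-w\|_2/\|w\|_2$ is at most $2$, since $V_j$ is an isometry.

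For the right-hand inequality, I use Lemma \ref{lem:upper_tau1}. It states that $\nu(A)\le\nu(A_k)$ for every $k\in G$ and every bounded $A$, in particular for $A\in\W(E)$. Since $\nu(A)$ is a lower bound for the whole family $\{\nu(A_k):k\in G\}$, it is at most the infimum of that family, so $\nu(A)\le\inf_{k\in G}\nu(A_k)$.

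Chaining the two inequalities gives the claim. I see no real obstacle. The only points to check are that $\eps(w,A)$ is finite, so that the subtraction is legitimate, and that $A_k$ denotes the same $\tau_1$ patch $AP_k|_{\im P_k}$ in both earlier results. Both points hold directly from the definitions.
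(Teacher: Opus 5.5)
Your proposal is correct and matches the paper's argument: the paper states this corollary without a separate proof, presenting it as the immediate combination of the lower bound in Corollary~\ref{cor:tau1} and the upper bound in Lemma~\ref{lem:upper_tau1}. Your extra check that $\eps(w,A)\le 2\|A\|_\W<\infty$, which makes the subtraction legitimate, is valid and harmless.
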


\begin{remark}
If we rearrange the start of the proof of Proposition \ref{prop:tau1} into
\[
\|M_{V_kw}Ax\|\ \le\ \|AM_{V_kw}x\| \ +\ \|[M_{V_kw},A]x\|\ \le\ \|A_k\tilde x_k\| \ +\ \sum_{j\in J}\|[M_{V_kw},M_{b^{(j)}}V_j]x\|,
\]
apply squares, summation over $k\in G$, a square root, Minkowski's inequality, our bounds \eqref{eq:RHS2} on $\sum_k\|[M_{V_kw},M_{b^{(j)}}V_j]x\|^2$ and \eqref{eq:lem-trick1}, we derive the existence of a $k\in G$ with
\[
\frac{\|A_{k}\, \tilde x_{k}\|}{\| \tilde x_{k}\|}\ \ge\ \frac{\|Ax\|}{\|x\|}\ -\ \eps(w,A).
\]
If we then carry on in a manner symmetric to the proof of Corollary \ref{cor:tau1_sandwich_w}, we get
\[
\sup_{k\in G}\|A_k\|\ +\ \eps(w,A)\ \ge\ \|A\|\ \ge\ \sup_{k\in G}\|A_k\|
\]
with the very same penalty term $\eps(w,A)$ as in \eqref{eq:pen_tau1}.
Summarising, our techniques do not only yield $\eps(w,A)$-good approximations of $\nu(A)$ but also of $\|A\|$ in terms of (lower) norms of operator patches $A_k$. This fact is not a property of a particular operator $A$ but of the underlying space $\ell^2(G,Y)$, see \cite[Prop.~3.4]{HagLiSei} and the discussion following its proof there.
\end{remark}

Making use of the minimisation result from Section \ref{sec:min_tau1}, we can replace the weight-dependent penalty term $\eps(w,A)$ by the window-dependent penalty term $\eps_p(W,A)$ from \eqref{eq:Weps}:

\begin{corollary}[\bf Upper and lower $\tau_1$ bounds on the lower norm (window-dependent)] \label{cor:tau1_sandwich_W}
Let $0\leq p\leq 1$, $A\in \W_p(E)$ as in \eqref{eq:A} on $\ell^2(G,Y)$ with a countable Abelian group $(G,+)$ and a Hilbert space $Y$, and
let a finite truncation window $W\subset G$ be given. Then it holds that
\begin{equation} \label{eq:sandw_nuA}
\inf_{k\in G}\nu(A_k)\ -\ \eps_p(W,A)\ \le\ \nu(A)\ \le\ \inf_{k\in G}\nu(A_k),
\end{equation}
where the operator patches $A_k$ are defined by \eqref{eq:patchA_tau1} and $\eps_p(W,A)$ is the window-dependent penalty term defined in \eqref{eq:Weps}.
\end{corollary}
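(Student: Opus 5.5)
The upper bound in \eqref{eq:sandw_nuA} needs no assumption on $A$ beyond boundedness: it is exactly Lemma~\ref{lem:upper_tau1} for each $k\in G$, followed by the infimum over $k$. So only the lower bound needs work. For it, I would combine the weight-dependent bound of Corollary~\ref{cor:tau1} with the minimisation over weights in Proposition~\ref{prop:min_eps_tau1}.

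First, since $A\in\W_p(E)\subset\W(E)$, the hypotheses of Proposition~\ref{prop:tau1} and Corollary~\ref{cor:tau1} hold for every nonzero $w\in\ell^2(G)$ with $\supp w\subset W$. Corollary~\ref{cor:tau1} therefore gives
\[
\inf_{k\in G}\nu(A_k)\ \le\ \nu(A)+\eps(w,A)
\]
for every such $w$. The left-hand side does not depend on $w$. The reason is that the patches $A_k=AP_k$, acting on $\ell^2(k+W,Y)$, depend only on the window $W$ and not on $w$; the weight enters only through the vector patches $\tilde x_k$ used inside the proof.

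Next, I take the infimum over all admissible $w$ on the right-hand side. This gives
\[
\inf_{k\in G}\nu(A_k)\ \le\ \nu(A)+\inf_{w}\eps(w,A)\ \le\ \nu(A)+\eps_p(W,A),
\]
where the last step is \eqref{eq:Weps} from Proposition~\ref{prop:min_eps_tau1}. Rearranging yields the lower bound in \eqref{eq:sandw_nuA}.

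I expect no real obstacle, as this is a short combination of earlier results. The one point to check is that the infimum in \eqref{eq:Weps} may be taken inside the inequality. This is legitimate because the inequality holds for every admissible $w$ with a $w$-independent left-hand side. A second point is the degenerate case in which $\eps_p(W,A)$ is infinite or the sums vanish. Under $A\in\W_p(E)$ the square-root factor is finite, which the paper notes before Proposition~\ref{prop:min_eps_tau1}, and $\min\Spec L_W^{(D)}$ is finite because $W$ is finite. Hence $\eps_p(W,A)<\infty$ and the statement is meaningful; if the prefactor is zero, the bound is still valid.
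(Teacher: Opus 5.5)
Your proposal is correct and follows essentially the paper's argument. The paper takes the weight-dependent sandwich of Corollary~\ref{cor:tau1_sandwich_w}, which is just Corollary~\ref{cor:tau1} combined with Lemma~\ref{lem:upper_tau1}, replaces $\eps(w,A)$ by its infimum over admissible $w$, and then applies \eqref{eq:Weps}, exactly as you do.
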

\begin{proof}
Note that Corollary \ref{cor:tau1_sandwich_w} holds for every $w\in\ell^2(G,Y)\setminus\{0\}$ with $\supp w\subset W$ and hence with $\eps(w,A)$ replaced by its corresponding infimum. Then use inequality \eqref{eq:Weps}.
\end{proof}

Applying Corollary \ref{cor:tau1_sandwich_W} to $A^*$, noting that $\eps_p(W,A^*)=\eps_p(W,A)$ by \eqref{eq:Weps} since $A$ and $A^*$ have the same diagonal suprema, we arrive at
\begin{equation} \label{eq:sandw_nuA*}
\inf_{k\in G}\nu((A^*)_k)\ -\ \eps_p(W,A)\ \le\ \nu(A^*)\ \le\ \inf_{k\in G}\nu((A^*)_k).
\end{equation}
Passing to the minimum of \eqref{eq:sandw_nuA} and \eqref{eq:sandw_nuA*}, we get
\begin{equation} \label{eq:sandw_nuAA*}
\inf_{k\in G}\min\{\nu(A_k),\nu((A^*)_k)\}\ -\ \eps_p(W,A)\ \le\ \underbrace{\min\{\nu(A),\nu(A^*)\}}_{\mu(A)}\ \le\ \inf_{k\in G}\min\{\nu(A_k,\nu((A^*)_k)\}.
\end{equation}
From here we can easily jump to the conclusion for the pseudospectrum, where we recall that, for bounded operators $A:\ell^2(G,Y)\to\ell^2(G,Y)$, we have $\Specn_0A=\Spec A$.

\begin{corollary}[\bf Upper and lower $\tau_1$ inclusions for the pseudospectrum] \label{cor:speps_tau1}
Let $0\leq p\leq 1$, $A\in \W_p(E)$ as in \eqref{eq:A} on $\ell^2(G,Y)$ with a countable Abelian group $(G,+)$ and a Hilbert space $Y$, and
let a finite truncation window $W\subset G$ be given. 

{\bf a) } Then it holds that
\[
\begin{array}{rcll}
\gamma_{\eps}^W(A)\ \subset&\!\speps A\! &\subset\  \gamma_{\eps+\eps_p(W,A)}^W(A),& \quad\eps>0\qquad\text{and}\\[1mm]
\Gamma_{\eps}^W(A)\ \subset&\!\Speps A\! &\subset\  \Gamma_{\eps+\eps_p(W,A)}^W(A),& \quad\eps\ge 0,
\end{array}
\]
where $\eps_p(W,A)$ is the window-dependent penalty term defined in \eqref{eq:Weps}, the operator patches $A_k$ and $(A^*)_k$ are defined by \eqref{eq:patchA_tau1} and
\begin{eqnarray*}
\gamma_{\eps}^W(A)&:=&\Big\{\lambda\in\C: \inf_{k\in G}\min\big\{\nu(A_k-\lambda I_k),\nu((A^*)_k-\lambda I_k)\big\}<\eps\Big\},\quad \eps> 0\qquad\text{as well as}\\
\Gamma_{\eps}^W(A)&:=&\Big\{\lambda\in\C: \inf_{k\in G}\min\big\{\nu(A_k-\lambda I_k),\nu((A^*)_k-\lambda I_k)\big\}\le\eps\Big\},\quad \eps\ge 0.
\end{eqnarray*}
{\bf b) } If $\dim Y<\infty$ then one has
\[
\gamma_{\eps}^W(A)\ =\ \bigcup_{k\in G} \big(\speps A_k\cup \speps (A^*)_k\big),\qquad\eps>0,
\]
where the pseudospectra of $A_k$ and $(A^*)_k$ are to be understood in the sense of \eqref{eq:speps.rect}, while $\Gamma_{\eps}^W(A)$ is in general neither $\bigcup_{k\in G} \big(\Speps A_k\cup \Speps (A^*)_k\big)$ nor its closure.
\end{corollary}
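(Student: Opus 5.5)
For part a), I will apply the sandwich \eqref{eq:sandw_nuAA*} to the shifted operator $A-\lambda I$ in place of $A$, for each fixed $\lambda\in\C$. First I check that $A-\lambda I\in\W_p(E)$. Subtracting $\lambda I$ only changes the main diagonal $b^{(0)}$. The penalty $\eps_p(W,A)$ in \eqref{eq:Weps} involves only $j\ne 0$, both in the sum and in the graph weights $b(k,\ell)$, since these carry the factor $\chi_{G\setminus\{0\}}(k-\ell)$. Hence $\eps_p(W,A-\lambda I)=\eps_p(W,A)$, independently of $\lambda$.

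Next I identify the patches of the shifted operators. For the $\tau_1$ patch, $(A-\lambda I)_k=(A-\lambda I)P_k$ restricted to $\im P_k$, which is $A_k-\lambda I_k$, where $I_k$ is the embedding $\ell^2(k+W,Y)\to\ell^2(G,Y)$. For the adjoint, $(A-\lambda I)^*=A^*-\lambda I$ with no conjugation, by the paper's convention, so $((A-\lambda I)^*)_k=(A^*)_k-\lambda I_k$. Writing
\[
m(\lambda):=\inf_{k\in G}\min\{\nu(A_k-\lambda I_k),\nu((A^*)_k-\lambda I_k)\},
\]
\eqref{eq:sandw_nuAA*} gives
\[
m(\lambda)-\eps_p(W,A)\ \le\ \mu(A-\lambda I)\ \le\ m(\lambda).
\]
If $\lambda\in\gamma^W_\eps(A)$, then $m(\lambda)<\eps$, so $\mu(A-\lambda I)<\eps$, i.e.\ $\lambda\in\speps A$. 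If $\lambda\in\speps A$, then $m(\lambda)<\eps+\eps_p(W,A)$, so $\lambda\in\gamma^W_{\eps+\eps_p(W,A)}(A)$. The closed versions follow in the same way with $\le$ in place of $<$, and with $\eps=0$ allowed via $\Specn_0A=\Spec A$.

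For part b), by \eqref{eq:speps.rect} in the case $\dim Y<\infty$, $\speps A_k=\{\lambda:\nu(A_k-\lambda I_k)<\eps\}$, and likewise for $(A^*)_k$. An infimum over $k$ of a minimum of two quantities is $<\eps$ exactly when one of the quantities is $<\eps$ for some $k$. This gives the union identity for $\gamma^W_\eps(A)$.

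The only non-routine point is the negative claim about $\Gamma^W_\eps(A)$. Here the infimum over infinitely many $k$ need not be attained, so $\Gamma$ can contain $\lambda$ with $\nu(A_k-\lambda I_k)>\eps$ for all $k$ but infimum exactly $\eps$. I would exhibit this with a counterexample on $G=\Z$, $Y=\C$. Take $A=M_b$ diagonal with values $b_i$ accumulating at some $\lambda_0$ but never equal to it, and take $W=\{0\}$. Then $\nu(A_k-\lambda I_k)=|b_k-\lambda|$. This suggests that $\Gamma^W_0(A)$ equals the closure of the range of $b$, whereas the union of $\Specn_0$ of the patches is the range alone, so $\lambda_0$ is missed.

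For $\eps>0$, I would choose $b_k$ so that the distances $|b_k-\lambda|$ approach $\eps$ only from above at some isolated point $\lambda_0$. An example is $b_k=\lambda_0+\eps+1/|k|$, which lies on a real ray. Then $\lambda_0$ lies in $\Gamma^W_\eps(A)$ but in no $\Speps A_k$. I still need to check that $\lambda_0$ is not in the closure of the union. The union is a union of closed disks of radius $\eps$ centred at $b_k$. These disks accumulate at the disk around $\lambda_0+\eps$, whose boundary passes through $\lambda_0$, so the closure does contain $\lambda_0$.

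So I expect the main obstacle to be making the closure part of the counterexample work. For that I would instead try to make $\Gamma$ strictly bigger than the closure by using nontrivial $W$, or else argue via the other direction, i.e.\ find an example where the closure of the union is strictly larger than $\Gamma$. That would be plausible if $\lambda\mapsto m(\lambda)$ has a local minimum equal to $\eps$, since then $\Gamma$ contains points not in the closure of $\{m<\eps\}$.
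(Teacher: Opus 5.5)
Your part a) and the identity for $\gamma^W_\eps(A)$ in part b) are exactly the paper's argument. You replace $A$ by $A-\lambda I$ in \eqref{eq:sandw_nuAA*}, using that $\eps_p(W,\cdot)$ ignores the main diagonal and that $\lambda$ is not conjugated in the adjoint, and you note that an infimum is $<\eps$ iff some term is. Your diagonal examples also correctly show that $\Gamma^W_\eps(A)$ can differ from $\bigcup_k(\Speps A_k\cup\Speps(A^*)_k)$.

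The gap is the claim that $\Gamma^W_\eps(A)$ need not equal the \emph{closure} of that union. The paper settles this by pointing to the examples in Section 5.2 of \cite{CW.Heng.ML:JST}. Your proposal ends without an example, and the fallback you suggest cannot work. Each map $\lambda\mapsto\nu(A_k-\lambda I_k)$, and likewise for $(A^*)_k$, is $1$-Lipschitz by \eqref{eq:Lipschitz}. So $m$ is continuous, and $\Gamma^W_\eps(A)=m^{-1}([0,\eps])$ is a closed set containing every $\Speps A_k$ and $\Speps(A^*)_k$. Hence the closure of the union is \emph{always} contained in $\Gamma^W_\eps(A)$ and can never be strictly larger. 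What you need is a $\lambda_0$ with $m(\lambda_0)=\eps$ such that \emph{every} patch function stays strictly above $\eps$ on a neighbourhood of $\lambda_0$. A local minimum of $m$ alone is not enough: with $c\equiv 1$ in the example below, $m$ has a strict local minimum $1$ at $0$, yet $0\in\Specn_1 A_k$ for all $k$.

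Moreover, diagonal operators on $\ell^2(G)$ (i.e.\ $Y=\C$) can never produce such an example, whatever $W$ you take. For $A=M_b$ one gets $\nu(A_k-\lambda I_k)=\min_{i\in k+W}|b_i-\lambda|$, so $m(\lambda)=\dist(\lambda,\{b_i:i\in G\})$. Then $\Gamma^W_\eps(A)$ coincides exactly with the closure of $\bigcup_i\{\lambda:|\lambda-b_i|\le\eps\}$, which is the closure of the union.

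The missing idea is to exploit that the $\tau_1$ patches are non-square, as the paper remarks after the corollary. A column of $A$ carries $\lambda$-independent off-diagonal mass, so $\lambda\mapsto\nu(A_k-\lambda I_k)$ can have a positive strict local minimum. For instance, take $G=\Z$, $Y=\C$, $W=\{0\}$ and $A=M_cV_1$ with $c_i=1+\frac{1}{1+|i|}$. Then $Ae_k=c_{k+1}e_{k+1}$ and $A^*e_k=c_ke_{k-1}$, so
\[
\nu(A_k-\lambda I_k)=\sqrt{|\lambda|^2+c_{k+1}^2}>1,\qquad \nu((A^*)_k-\lambda I_k)=\sqrt{|\lambda|^2+c_k^2}>1
\]
for all $k\in\Z$ and $\lambda\in\C$, while $m(\lambda)=\sqrt{|\lambda|^2+1}$. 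Thus $\Gamma^W_1(A)=\{0\}$, whereas all $\Specn_1 A_k$ and $\Specn_1 (A^*)_k$ are empty, so both the union and its closure are $\varnothing$. Replacing $c$ by $\eps c$ gives the same conclusion for any $\eps>0$, and $c_i=\frac{1}{1+|i|}$ does it for $\eps=0$.
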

\begin{proof}
{\bf a) } Replace $A$ by $A-\lambda I$ in \eqref{eq:sandw_nuAA*}, noting that $\eps_p(W,A-\lambda I)=\eps_p(W,A)$ for $\lambda\in\C$, by \eqref{eq:Weps}, and $(\lambda I)^*=\lambda I$, not $\overline\lambda I$. Now assume the right-hand side of that inequality is $<\eps$, resp.~$\le\eps$.

{\bf b) } The statement about $\gamma_{\eps}^W(A)$ is clear since $\inf_k y_k<\eps$ if and only if $y_k<\eps$ for some $k$. For the statement about $\Gamma_{\eps}^W(A)$, see Section 5.2 of \cite{CW.Heng.ML:JST}, in particular the examples there.
\end{proof}

The difficulties with $\Gamma_{\eps}^W(A)$ in statement b) are connected with the non-square operator patches; they will resolve for the square operator patches in the $\tau$ and $\pi$ methods.

Note that the sets $\gamma_{\eps}^W(A)$ and $\Gamma_{\eps}^W(A)$ depend on $W$ via the operator patches $A_k$, where we suppress $W$ in the notation. Now this $W$-dependence takes center stage:

\subsubsection{Convergence of the pseudospectral inclusions}
We show that if 
$(W_n)_{n\in\N}$ is a sequence of finite subsets of $G$ such that $W_n\subseteq W_{n+1}$ for all $n\in\N$ and $\bigcup_{n\in\N} W_n = G$
then both set sandwiches of Corollary \ref{cor:speps_tau1} a) converge to the pseudospectrum that they bound from above and below, $\speps A$, resp.~$\Speps A$. The convergence of a sequence of sets that we have in mind is to be understood as follows:

The Hausdorff distance of two bounded sets $S,T\subset\C$ is
\[
d_H(S,T)\ :=\ \max\Big(\sup_{s\in S}\dist(s,T),\sup_{t\in T}\dist(t,S) \Big),
\quad\text{where}\quad
\dist(s,T)=\inf_{t\in T}|s-t|.
\]
It is a metric on the set of all compact subsets of $\C$ and a pseudometric on the set of all bounded subsets because $d_H(S,T)=0$ as soon as $\clos S=\clos T$. For bounded sets $S,S_1,S_2,\dots\subset\C$ we say that $S_n$ Hausdorff-converges to $S$ and write $S_n\to S$ if $d_H(S_n,S)\to 0$, noting that in this context of a pseudometric, the statement $S_n\to S$ only determines the closure of the limit $S$, not $S$ itself.

By \eqref{eq:closspeps}, for $\eps>0$, a set sequence converges to $\speps A$ if and only if it converges to $\Speps A$.

As a useful alternative characterisation (see e.g. \cite[Prop.~3.6]{HaRoSi2}) of Hausdorff convergence for bounded sequences $(S_n)$ of bounded sets in $\C$, one has that
\begin{equation} \label{eq:Haus_liminf_limsup}
S_n\to S\qquad\iff\qquad
\liminf_{n\to\infty} S_n\ =\ \limsup_{n\to\infty}S_n\ =\ S,
\end{equation}
where $\liminf S_n$ is the set of all limits of sequences $(s_n)$ with $s_n\in S_n$ for all $n\in\N$, and $\limsup S_n$ is the set of all partial limits (i.e., accumulation points) of such sequences $(s_n)$. 
Note that 
\begin{equation} \label{eq:liminf_clos}
\liminf \clos S_n\ =\ \liminf S_n\ =\ \clos\liminf S_n
\end{equation}
holds as well as the same formula with $\limsup$.

Since $G$ is a group, a graph $(b,c)$ over $G$ is connected if and only if $B(o,1)$ is a generating set of $G$, where $o$ is the unit element of $G$. In particular, the graph $(b,c)$ from Proposition \ref{prop:min_eps_tau1} is connected if and only if $\{j\in G:\; \|b^{(j)}\|_\infty + \|b^{(-j)}\|_\infty \neq 0\}$ is a generating set.

\begin{corollary}[\bf Convergence of the $\tau_1$ inclusion bounds to the pseudospectrum] \label{cor:conv_tau1}
Let $0\leq p\leq 1$, $A\in\W_p(E)$.
Assume that the graph $(b,c)$ over $G$ from Proposition \ref{prop:min_eps_tau1} has subsexponential growth and is connected.
If $\eps>0$ and 
$(W_n)_{n\in\N}$ is a sequence of finite subsets of $G$ such that $W_n\subseteq W_{n+1}$ for all $n\in\N$ and $\bigcup_{n\in\N} W_n = G$
then both the upper and lower inclusion bounds from Corollary \ref{cor:speps_tau1} a) Hausdorff-converge as $n\to\infty$ to the set that they bound, i.e.~to the pseudospectrum of $A$. If $\eps=0$ then $\Gamma_{\eps_p(W_n,A)}^{W_n}\to\Specn_0 A=\Spec A$.
\end{corollary}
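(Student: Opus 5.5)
The plan is to reduce everything to one fact: the penalty vanishes, i.e.\ $\eps_p(W_n,A)\to 0$ as $n\to\infty$. Granted this, the convergence follows from the sandwich in Corollary \ref{cor:speps_tau1} a) and continuity of $\eps\mapsto\speps A$, as sketched in the introduction. Iterating the sandwich with $\eps_n:=\eps_p(W_n,A)$ and $\eps>\eps_n$ gives
\[
\specn_{\eps-\eps_n}A\ \subset\ \gamma^{W_n}_{\eps}(A)\ \subset\ \speps A\ \subset\ \gamma^{W_n}_{\eps+\eps_n}(A)\ \subset\ \specn_{\eps+\eps_n}A .
\]
The outer inclusions hold because $\gamma^{W}_\delta(A)\subset\specn_\delta A$ (lower inclusion at level $\delta$) and $\specn_{\eps-\eps_n}A\subset\gamma^{W_n}_{\eps}(A)$ (upper inclusion at level $\eps-\eps_n$). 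The same chain holds with closed sets $\Gamma$ and $\Specn$.

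\textbf{Step 1: $\eps_p(W_n,A)\to0$.} The prefactor in \eqref{eq:Weps} is a finite constant independent of $W$, by $A\in\W_p(E)$. So it suffices to show $\lambda_0(W_n):=\min\Spec L^{(D)}_{W_n}\to 0$. Since $W_n\uparrow G$, domain monotonicity via the Rayleigh quotient gives that $\lambda_0(W_n)$ decreases. Its limit is at most the bottom of the spectrum of the Dirichlet Laplacian $L$ on all of $G$, which is $\inf_{w\in C_c(G)}Q(w,w)/\|w\|^2$, because every finitely supported $w$ eventually lies in some $\ell^2(W_n)$. The whole problem is therefore to show $\inf\Spec L=0$.

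I expect this to be the main obstacle, and it is where connectedness and subexponential growth enter. The approach is a Brooks-type or ball test-function argument. Take $w=\chi_{B(k,r)}$, or better a radially decaying cut-off $w(\ell)=(1-d(k,\ell)/r)_+$. Then $Q(w,w)$ only picks up edges. For the indicator it is bounded by the boundary mass of the ball; for the linear cut-off it is bounded by $r^{-2}\sum_{\ell\in B(k,r)} \deg(\ell)$, where $\deg(\ell)=\sum_m b(\ell,m)=\sum_{j\ne0}2((\|b^{(j)}\|+\|b^{(-j)}\|)/2)^p$ is a finite constant independent of $\ell$ by group invariance. Meanwhile $\|w\|^2\ge \tfrac14\sharp B(k,r/2)$. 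This gives $Q/\|w\|^2\lesssim r^{-2}\,\sharp B(k,r)/\sharp B(k,r/2)$. Subexponential growth (liminf of $\tfrac1r\log$ of the volume ratio being $0$) yields arbitrarily large $r$ for which the ratio $\sharp B(k,r)/\sharp B(k,r/2)$ cannot be exponentially large. If needed, I would pass along a dyadic chain of radii to find consecutive radii with a bounded ratio, so the quotient tends to $0$. Here connectedness makes the balls grow, so that $d$ is finite and the balls exhaust $G$. One technicality is long edges for $p<1$ or for infinitely many $j$; since the degree is uniformly bounded, the estimate $|w(\ell)-w(m)|\le d(\ell,m)/r=1/r$ for adjacent $\ell,m$ still works.

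\textbf{Step 2: Hausdorff convergence.} For $\eps>0$, $\speps A$ is bounded, since it is contained in the disc of radius $\|A\|+\eps$. By \eqref{eq:Haus_liminf_limsup} it suffices to show that $\liminf$ and $\limsup$ of the bounds equal $\Speps A$. Every $\lambda\in\speps A$ lies in $\specn_{\eps-\delta}A$ for some $\delta>0$, and hence in $\gamma^{W_n}_\eps(A)$ once $\eps_n<\delta$; so $\speps A\subset\liminf$. Conversely, every $\limsup$ point lies in $\Specn_{\eps+\eps_n}A$ for all large $n$, hence in $\bigcap_n\Specn_{\eps+\eps_n}A=\Speps A$, by continuity of $\lambda\mapsto\mu(A-\lambda I)$. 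Then \eqref{eq:closspeps} and \eqref{eq:liminf_clos} close the argument. The same reasoning applies to the upper bounds at level $\eps+\eps_n\downarrow\eps$.

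\textbf{Step 3: the case $\eps=0$.} Here $\Spec A\subset\Gamma^{W_n}_{\eps_n}(A)\subset\Specn_{2\eps_n}A$, and $\bigcap_{\delta>0}\Specn_\delta A=\Spec A$. The same $\liminf/\limsup$ argument then gives convergence to $\Spec A$.
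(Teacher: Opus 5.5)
Your architecture matches the paper's: show the penalty tends to $0$, then squeeze using Corollary~\ref{cor:speps_tau1}~a) and continuity of $\eps\mapsto\speps A$. Steps~2 and~3 are fine. Your domain-monotonicity reduction of $\lim_n\min\Spec L^{(D)}_{W_n}$ to $\inf_{0\ne w\in C_c(G)}\mathcal Q(w,w)/\|w\|_2^2$ is also correct. It is a more elementary substitute for the paper's generalised strong resolvent convergence \cite[Lemma 1.21]{KellerLenzWojciechowskiBook} plus spectral inclusion.

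The gap is in the step you yourself single out: showing that this infimum is $0$. The paper takes this from the Brooks-type result \cite[Theorem 13.8]{KellerLenzWojciechowskiBook}. Write $V(r):=\sharp B(o,r)$ (independent of the centre by translation invariance) and let $D$ be your constant degree. Your linear cut-off only gives $\mathcal Q(w,w)/\|w\|_2^2\le 4D\,r^{-2}V(r)/V(r/2)$, so you need $V(r)/V(r/2)=o(r^2)$ along some sequence $r\to\infty$. That does not follow from $\liminf_{r}\frac1r\log V(r)=0$. The volume function $V(r)=e^{\sqrt r}$ is subexponential, yet $r^{-2}V(r)/V(r/2)=r^{-2}e^{(1-1/\sqrt2)\sqrt r}\to\infty$ and $V(2r)/V(r)\to\infty$. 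So the ``dyadic chain with bounded ratio'' you plan to extract need not exist. Subexponential growth only controls ratios of volumes whose radii differ by a bounded amount, not by a factor.

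That observation also gives the repair. Take $w=\chi_{B(k,r)}$ with $r\in\N$. With the combinatorial distance, every edge leaving $B(k,r)$ starts on the sphere $B(k,r)\setminus B(k,r-1)$. Hence $\mathcal Q(w,w)\le D\,(V(r)-V(r-1))$, and the Rayleigh quotient is at most $D\,(1-V(r-1)/V(r))$. If $V(r)\ge(1+\delta)V(r-1)$ held for all large $r$, then $V$ would grow exponentially. So subexponential growth yields arbitrarily large $r$ making this quotient as small as you like. Equivalently, you can keep your ramp but with fixed width $s$: this gives the bound $Ds^{-2}V(r)/V(r-s)$, and you then let $s\to\infty$.

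Alternatively, in the present setting the growth hypothesis presupposes that $B(o,1)$ is finite. Then finitely many $j$ generate $G$, so $G\cong\Z^d\oplus H$ has polynomial growth $V(r)\asymp r^d$, and your ratio $V(r)/V(r/2)$ is bounded after all. But that has to be argued; it is not a consequence of subexponential growth as your proposal claims. Either fix, or citing the Brooks-type theorem as the paper does, closes Step~1.
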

\begin{proof}
Let $L$ denote the Dirichlet Laplacian on $\ell^2(G)$ with $b$ and $c$ as in Proposition \ref{prop:min_eps_tau1}. By subexponential growth and \cite[Theorem 13.8]{KellerLenzWojciechowskiBook}, one has $\min\Spec L = 0$. By \cite[Lemma 1.21]{KellerLenzWojciechowskiBook} we observe that $L_{W_n}^{(D)}\to L$ in (generalised) strong resolvent sense. By self-adjointness, we have that every $\lambda\in \Spec L$ (including $\lambda=0$) is the limit of a sequence $(\lambda_n)$ with $\lambda_n\in \Spec L_{W_n}^{(D)}$, see e.g. \cite[Thm.~VIII.24(a)]{ReedSimon1} or an obvious modification of the argument in \cite[Thm.~7.2]{HaRoSi2}. Consequently, $\min\Spec L_{W_n}^{(D)}\to 0$ as $n\to\infty$, whence also $\eps_p(W_n,A)\to 0$, by \eqref{eq:Weps}.

Let $\eps>0$ and let 
$n\in\N$ be large enough that $\eps_p(W_n,A)\in (0,\eps)$. 
By Corollary~\ref{cor:speps_tau1}~a), 
\begin{equation} \label{eq:5}
\specn_{\eps-\eps_p(W_n,A)}A\ \subset \gamma_{\eps}^{W_n}(A)\ \subset\ \speps A\ \subset\  \gamma_{\eps+\eps_p(W_n,A)}^{W_n}(A)\ \subset\ \specn_{\eps+\eps_p(W_n,A)}A.
\end{equation}
By sending 
$n\to \infty$, all sets in \eqref{eq:5} converge to $\speps A$ since $\eps_p(W_n,A)\to 0$ and by Hausdorff-continuity of the map $\eps\mapsto\speps A$.  The same arguments show that $\Gamma_{\eps}^{W_n}(A)\to\Speps A$ and $\Gamma_{\eps+\eps_p(W_n,A)}^{W_n}(A)\to\Speps A$ if $\eps>0$. For $\eps=0$, use $\Specn_0 A \subset  \Gamma_{0+\eps_p(W_n,A)}^{W_n}(A) \subset \Specn_{0+\eps_p(W_n,A)}A$.
\end{proof}

\begin{remark}
    Let $W\subseteq \widetilde{W}\subseteq G$ be finite. Then $\varepsilon_p(W,A)\geq \varepsilon_p(\widetilde{W},A)$. Indeed, let $w\in \ell^2(W)$ be a normalized eigenfunction for $L_{W}^{(D)}$ to the eigenvalue $\min \Spec L_{W}^{(D)}$ and let $\widetilde{w}$ be the extension of $w$ by zero to $\widetilde{W}$. Then
    \begin{align*}
        \min \Spec L_{W}^{(D)}
        & = Q_W^{(D)}(w,w) = \mathcal{Q}(\iota_W w, \iota_W w) = \mathcal{Q}(\iota_{\widetilde{W}} \widetilde{w}, \iota_{\widetilde{W}} \widetilde{w}) = Q_{\widetilde{W}}^{(D)} (\widetilde{w},\widetilde{w})\\
        & \geq \inf_{x\in \ell^2(\widetilde{W}), \|x\|_2=1} Q_{\widetilde{W}}^{(D)}(x,x) = \min \Spec L_{\widetilde{W}}^{(D)},
    \end{align*}
    which yields the assertion.
\end{remark}

\subsection{The \texorpdfstring{$\tau$}{tau} method: two-sided truncations}
For $k\in G$ abbreviate $P_kAP_k=A_k^{(\tau)}=:A_k$ here since this subsection is all about the $\tau$ method. Note that this time we have $A_k:\im P_k=\ell^2(k+W,Y)\to\im P_k=\ell^2(k+W,Y)$.

\subsubsection{The $\tau$ version of \eqref{eq:patchAx_intro} with weight-dependent truncation penalty}
\begin{proposition} \label{prop:tau}
Let $A\in \W(E)$, in which case $A$ has the representation \eqref{eq:AW},
for some sequence $(b^{(j)})_{j\in G} \subset \ell^\infty(G,L(Y))$ with $\|A\|_\W <\infty$, and 
suppose a finite set $W\subset G$ and
$w\in \ell^2(G)$ with $w\neq 0$ and $\supp w\subset W$ 
 are given. Then, for every nonzero $x\in E$, there exists a $k\in G$ such that $\tilde x_k\ne 0$ and
\begin{equation} \label{eq:patchAx_tau}
\frac{\|A_{k}\, \tilde x_{k}\|}{\| \tilde x_{k}\|}\ \le\ \frac{\|Ax\|}{\|x\|}\ +\ \eps(w,A),
\end{equation}
where
\begin{equation} \label{eq:pen_tau}
\eps(w,A)\ :=\ \eps^{(\tau)}(w,A)\ :=\ \sum_{j\in G} \|b^{(j)}\|_\infty \frac{\|P_0V_jw-w\|_2}{\|w\|_2}
\end{equation}
and where the patches $A_k$ and $\tilde x_k$ are defined by \eqref{eq:patchA_tau} and \eqref{eq:patchx} above.
\end{proposition}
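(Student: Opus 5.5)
We follow the proof of Proposition~\ref{prop:tau} step by step, with $A_k=P_kAP_k$ replacing $AP_k$. Fix a nonzero $x\in E$. Since $\supp V_kw\subset k+W$, we have $P_kM_{V_kw}=M_{V_kw}$, so $\tilde x_k\in\im P_k$. Then
\[
A_k\tilde x_k\ =\ P_kAM_{V_kw}x\ =\ P_kM_{V_kw}Ax+P_k[A,M_{V_kw}]x\ =\ M_{V_kw}Ax+\sum_{j\in G}P_k[M_{b^{(j)}}V_j,M_{V_kw}]x.
\]
The first term is unchanged from the $\tau_1$ case. Its $\ell^2$-sum over $k$ equals $\|w\|_2\|Ax\|$ by \eqref{eq:lem-trick1}.

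The only new work is the commutator term with the extra factor $P_k$. Since $P_k$ is multiplication by $\chi_{k+W}$, it commutes with $M_{b^{(j)}}$. Using $P_kM_{V_kw}=M_{V_kw}$ and $V_jM_{V_kw}=M_{V_{j+k}w}V_j$, we get
\[
P_k[M_{b^{(j)}}V_j,M_{V_kw}]x\ =\ M_{b^{(j)}}\big(M_{\chi_{k+W}V_{j+k}w}-M_{V_kw}\big)V_jx\ =\ M_{b^{(j)}}M_{V_k(P_0V_jw-w)}V_jx.
\]
Here we use $\chi_{k+W}\cdot V_{j+k}w=V_k(\chi_W\cdot V_jw)=V_k(P_0V_jw)$, which relies on commutativity of $G$. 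Pointwise, the $i$-th entry has norm at most $\|b^{(j)}\|_\infty\,|(P_0V_jw-w)_{i-k}|\,\|x_{i-j}\|$. Summing over $i$ and $k$ as in \eqref{eq:lem-trick1} and \eqref{eq:RHS2} gives
\[
\sum_{k\in G}\|P_k[M_{b^{(j)}}V_j,M_{V_kw}]x\|^2\ \le\ \|b^{(j)}\|_\infty^2\,\|P_0V_jw-w\|_2^2\,\|x\|^2.
\]
This is the step I expect to be the only real obstacle. The point is to see that the projection can be absorbed into the weight, so that $V_jw-w$ is replaced by the smaller $P_0V_jw-w$.

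The rest is identical to the proof of Proposition~\ref{prop:tau1}. Apply Minkowski's inequality across $j$ to the square-summed norms; the series $\sum_j\|b^{(j)}\|_\infty$ converges since $A\in\W(E)$, and $\|P_0V_jw-w\|_2\le 2\|w\|_2$. This gives $\sqrt{\sum_k\|A_k\tilde x_k\|^2}\le c\,\|w\|_2\|x\|=c\sqrt{\sum_k\|\tilde x_k\|^2}$, with $c=\|Ax\|/\|x\|+\eps^{(\tau)}(w,A)$. Since $\sum_k\|\tilde x_k\|^2=\|w\|_2^2\|x\|^2>0$, some $k$ has $\tilde x_k\neq0$ and $\|A_k\tilde x_k\|\le c\|\tilde x_k\|$, by the same averaging argument. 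Dividing by $\|\tilde x_k\|$ gives \eqref{eq:patchAx_tau}.
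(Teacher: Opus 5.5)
Your proof is correct and is essentially the paper's argument. The paper also inserts $P_k$ into the $\tau_1$ proof and bounds $\sum_k\|P_k[M_{b^{(j)}}V_j,M_{V_kw}]x\|^2$ by $\|b^{(j)}\|_\infty^2\|P_0V_jw-w\|_2^2\|x\|^2$; it does this entrywise over $i\in k+W$ with the substitution $m=i-k\in W$, where you work in operator form via $\chi_{k+W}\cdot V_{j+k}w=V_k(P_0V_jw)$ (and your opening sentence should cite Proposition~\ref{prop:tau1}, not the statement being proved).
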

\begin{proof}
The proof is almost identical to that of Proposition \ref{prop:tau1}. We skip steps that are literally the same and we take the liberty of colouring terms in \textcolor{blue}{blue} that are introduced or changed through the extra truncation $P_k$ after the action of $A$.

Take a nonzero $x\in E$. We have, for every $k\in G$,
\begin{eqnarray*}
A_k\tilde x_k &=& \textcolor{blue}{P_k}AP_kM_{V_kw}x\ =\ \textcolor{blue}{P_k}AM_{V_kw}x \ =\ \textcolor{blue}{P_k}M_{V_kw}Ax\ +\ \textcolor{blue}{P_k}[A,M_{V_kw}]x\\
&=& M_{V_kw}Ax\ +\ \textcolor{blue}{P_k}[A,M_{V_kw}]x
\ =\ M_{V_kw}Ax\ +\ \sum_{j\in G} \textcolor{blue}{P_k}[M_{b^{(j)}}V_j,M_{V_kw}]x,
\end{eqnarray*}
so that
\[
\|A_k\tilde x_k\| \ \le\ \|M_{V_kw}Ax\|\ +\ \sum_{j\in G} \|\textcolor{blue}{P_k}[M_{b^{(j)}}V_j,M_{V_kw}]x\|.
\]
Applying squares, summation $\sum_{k\in G}$, a square root, and Minkowski's inequality, it follows that 
\begin{eqnarray}
\sqrt{\sum_{k\in G} \|A_k\tilde x_k\|^2} &\le&
 \sqrt{\sum_{k\in G}\|M_{V_kw}Ax\|^2}\ +\ \sum_{j\in J}\sqrt{\sum_{k\in G}\|\textcolor{blue}{P_k}[M_{b^{(j)}}V_j,M_{V_kw}]x\|^2} \label{eq:tag1'}.
\end{eqnarray}
Now again we evaluate the right-hand side of \eqref{eq:tag1'} one by one: First, \eqref{eq:trick1} holds unchanged.
Then we bound $\|\textcolor{blue}{P_k}[M_{b^{(j)}}V_j,M_{V_kw}]x\|^2$ for $j,k\in G$ as follows: 
\begin{eqnarray*}
 \|\textcolor{blue}{P_k}[M_{b^{(j)}}V_j,M_{V_kw}]x\|^2 &=& \sum_{i\in \textcolor{blue}{k+W}}\|(M_{b^{(j)}}V_jM_{V_kw}x)_i-(M_{V_kw}M_{b^{(j)}}V_jx)_i\|^2\\
&=& \sum_{i\in \textcolor{blue}{k+W}}\|b^{(j)}_i\cdot  w_{i-j-k}\cdot x_{i-j}-w_{i-k}\cdot b^{(j)}_i\cdot x_{i-j}\|^2\\
&\le& \|b^{(j)}\|_\infty^2\sum_{i\in \textcolor{blue}{k+W}} |w_{i-j-k}-w_{i-k}|^2\|x_{i-j}\|^2.
\end{eqnarray*}
Substituting $i-k=:m\in \textcolor{blue}{W}$, summing up over $k\in G$, and arguing as in \eqref{eq:lem-trick1},
\begin{eqnarray}
\sum_{k\in G} \|\textcolor{blue}{P_k}[M_{b^{(j)}}V_j,M_{V_kw}]x\|^2 &\le& \|b^{(j)}\|_\infty^2 \sum_{k\in G} \sum_{m\in \textcolor{blue}{W}} |w_{m-j}-w_{m}|^2\|x_{m+k-j}\|^2 \nonumber\\
&=& \|b^{(j)}\|_\infty^2 \sum_{m\in \textcolor{blue}{W}} |w_{m-j}-w_{m}|^2\sum_{k\in G} \|x_{m+k-j}\|^2\nonumber\\
&=& \|b^{(j)}\|_\infty^2 \|\textcolor{blue}{P_0}V_jw-w\|_2^2\|x\|_2^2.\label{eq:RHS2'}
\end{eqnarray}
From \eqref{eq:tag1'} and the bounds \eqref{eq:trick1} and \eqref{eq:RHS2'} we continue as in the proof of Proposition \ref{prop:tau1}.
\end{proof}

\subsubsection{Minimising the truncation penalty by variation of the weight function $w$}
\label{sec:min_tau}

We again optimise $\varepsilon(w,A)$ over nonzero $w\in \ell^2(G)$ supported in $W$.

\begin{proposition}
\label{prop:min_eps_tau}
Suppose, for some $0\leq p\leq 1$,  that $A\in \W_p(E)\subset \W(E)$, in which case $A$ has the representation \eqref{eq:AW},
for some sequence $(b^{(j)})_{j\in G} \subset \ell^\infty(G,L(Y))$ with $w_p(A) <\infty$, and let $W\subset G$ finite. Let $(b,c)$ be the graph over $G$ defined by
\[
b(k,\ell) := \blue{b^W(k,\ell)} := 2\Bigl(\frac{\|b^{(k-\ell)}\|_\infty + \|b^{(l-k)}\|_\infty}{2}\Bigr)^p \chi_{G\setminus\{0\}} (k-l)\blue{\chi_W(k)\chi_W(\ell)}
\quad (k,\ell\in G),
\] 
and $c(k):= \blue{c^W(k):= 2\sum_{j\in G\setminus\{0\}, k-j\in G\setminus W} \Bigl(\frac{\|b^{(k-j)}\|_\infty + \|b^{(j-k)}\|_\infty}{2}\Bigr)^p}$ for $k\in G$.
Then
\begin{align} \nonumber
\inf_{\begin{array}{c}{\scriptstyle w\in\ell^2(G)\setminus\{0\},}\\{\scriptstyle \supp w\subset W}\end{array}}\varepsilon(w,A) 
&\ \leq\ \sqrt{\sum_{j\in G\setminus\{0\}} \Bigl(\frac{\|b^{(j)}\|_\infty + \|b^{(-j)}\|_\infty}{2}\Bigr)^p} \sqrt{\min \Spec L_W^{(D)}}\\
\label{eq:Weps_tau}
&\ =:\ \eps_p^{(\tau)}(W,A)\ =:\ \eps_p(W,A),
\end{align}
where $L_W^{(D)}$ is the Dirichlet Laplacian with respect to $W$ for $(b,c)$, so that $\min \Spec L_W^{(D)}>0$.
\end{proposition}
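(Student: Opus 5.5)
The plan is to follow the proof of Proposition~\ref{prop:min_eps_tau1} closely. The only change is to replace $B_j=(V_j-I)P_0$ by the operator that matches the $\tau$ penalty \eqref{eq:pen_tau}, namely $B_j:=(P_0V_j-I)P_0$. Then $\|P_0V_jw-w\|_2=\|B_jw\|_2$ for $\supp w\subset W$, and the $j=0$ term vanishes. Abbreviate $a_j:=\tfrac12(\|b^{(j)}\|_\infty+\|b^{(-j)}\|_\infty)$, so that $a_j=a_{-j}$.

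\textbf{Step 1 (symmetrisation).} Unlike in the $\tau_1$ case, $\|B_jw\|_2$ and $\|B_{-j}w\|_2$ are not obviously equal. Both equal $\sum_{m\in W}|w_{m-j}-w_m|^2$ up to reindexing, except for boundary terms. Instead I would regroup $\eps(w,A)=\sum_j\|b^{(j)}\|_\infty\|B_jw\|_2/\|w\|_2$ symmetrically and bound it by $\sum_{j\ne0}a_j\,\|B_jw\|_2/\|w\|_2$, up to passing to $\max(\|B_jw\|,\|B_{-j}w\|)$. Concretely, I would write $\|b^{(j)}\|_\infty\|B_jw\|+\|b^{(-j)}\|_\infty\|B_{-j}w\|\le 2a_j\big(\tfrac12(\|B_jw\|^2+\|B_{-j}w\|^2)\big)^{1/2}$ via Cauchy--Schwarz on the pair. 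This yields a sum of the form $\sum_{j\ne0}a_j\,\beta_j$ with $\beta_j^2=\tfrac12(\|B_jw\|^2+\|B_{-j}w\|^2)$, which is even in $j$.

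\textbf{Step 2 (Cauchy--Schwarz with the stated split).} To land exactly on the stated prefactor $\sqrt{\sum_j a_j^{p}}$, I split $a_j\beta_j=a_j^{p/2}\cdot(a_j^{1-p/2}\beta_j)$. The natural alternative is to pair $a_j^{p/2}\beta_j$ with $a_j^{1-p/2}$, which is exactly what gives the $\tau_1$ prefactor $\sum a_j^{2-p}$. Instead I would keep the weight $a_j^{p}$ on the $\beta_j^2$ side, since that is the weight appearing in $b^W,c^W$. Then I would compare $a_j^{2-p}\le a_j^{p}$, which holds precisely when $a_j\le1$, with equality in the case $p=1$.

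This comparison is the step I expect to be the real obstacle. The stated inequality is not invariant under $A\mapsto tA$ when $p<1$: the left side scales like $t$, while the right side scales like $t^{p}\cdot t^{p/2}$. So the plan can only close under a normalisation $\sup_j a_j\le1$, which is harmless when $p=1$. I would first check $p=1$ completely, and then state where the normalisation enters for $p<1$.

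\textbf{Step 3 (identification with $L_W^{(D)}$).} Expand $\sum_{j\ne0}a_j^p\|B_jw\|_2^2=\sum_{j\ne0}a_j^p\sum_{m\in W}|w_{m-j}-w_m|^2$ and split according to whether $m-j\in W$ or $m-j\notin W$.
\begin{itemize}
\item For pairs with both $m,m-j\in W$, each unordered pair $\{m,m-j\}$ occurs for both $j$ and $-j$. This reproduces $\tfrac12\sum_{k,\ell}b^W(k,\ell)|w_k-w_\ell|^2$, because of the factor $2$ in $b^W$.
\item For $m-j\notin W$, the summand is $a_j^p|w_m|^2$. Summed over $j$, this gives exactly the killing term $c^W(m)|w_m|^2$ after the substitution $j\mapsto m-j$ built into the definition of $c^W$. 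The factor $2$ there accounts for the contributions of both $B_j$ and $B_{-j}$ after the symmetrisation of Step~1.
\end{itemize}
Hence the weighted sum equals $Q^{(D)}_W(w,w)=\langle L_W^{(D)}w,w\rangle$.

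Minimising the Rayleigh quotient over $w\ne0$ supported in $W$ then gives $\min\Spec L_W^{(D)}$, attained by the ground state. Positivity holds whenever some edge leaves $W$, which is the case for infinite $G$ and $A$ not diagonal, since then $c^W\not\equiv0$ on the finite set $W$.
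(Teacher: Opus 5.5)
Your Step~1 is where the argument breaks, not Step~2. Write $x=\|b^{(j)}\|_\infty$, $y=\|b^{(-j)}\|_\infty$, $u=\|B_jw\|_2$, $v=\|B_{-j}w\|_2$. You claim $xu+yv\le(x+y)\sqrt{(u^2+v^2)/2}$. Cauchy--Schwarz only gives $\sqrt{x^2+y^2}\,\sqrt{u^2+v^2}$, and $\sqrt{x^2+y^2}\ge(x+y)/\sqrt2$ points the wrong way. A concrete failure: take $G=\Z$, $A=V_1$ (so $x=1$, $y=0$, $a_{\pm1}=\frac12$), $W=\{1,2,3\}$ and $w=(1,\frac12,0)$. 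Then $u^2=\frac32$ and $v^2=\frac12$, so $xu=\sqrt{3/2}>1=(x+y)\sqrt{(u^2+v^2)/2}$.

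Since $a_j$ is even, $\sum_{j\ne0}a_j^p\beta_j^2=\sum_{j\ne0}a_j^p\|B_jw\|_2^2$, so no rebookkeeping helps. For this $w$ the pointwise bound $\eps(w,A)\le\bigl(\sum_{j\ne0}a_j^{2-p}\bigr)^{1/2}\bigl(\sum_{j\ne0}a_j^p\|B_jw\|_2^2\bigr)^{1/2}/\|w\|_2$ fails for every $p$. Your $\max$ variant is valid, but bounding $\max(u,v)^2\le u^2+v^2$ doubles the whole quadratic form, edge terms included.

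You are right that $\|B_jw\|_2\ne\|B_{-j}w\|_2$ for the truncated $B_j=P_0(V_j-I)P_0$. The paper's proof only says ``reasoning as in the proof of Proposition~\ref{prop:min_eps_tau1}'', which passes over exactly this point, and its displayed intermediate bound fails for the same $w$.

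The repair is to drop the truncation rather than symmetrise:
$\|P_0V_jw-w\|_2=\|P_0(V_j-I)w\|_2\le\|(V_j-I)w\|_2$, and the right-hand side is even in $j$. Hence $\eps^{(\tau)}(w,A)\le\eps^{(\tau_1)}(w,A)=\sum_{j\ne0}a_j\|(V_j-I)w\|_2/\|w\|_2$, and the Cauchy--Schwarz step of Proposition~\ref{prop:min_eps_tau1} applies verbatim. In $\sum_{j\ne0}a_j^p\|(V_j-I)w\|_2^2$, each pair $m\in W$, $\ell\notin W$ contributes $a_{m-\ell}^p|w_m|^2$ twice. It enters through $|w_{i-j}-w_i|^2$ once with $(i,i-j)=(m,\ell)$ and once with $(i,i-j)=(\ell,m)$. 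This is exactly the factor $2$ in $c^W(k)=2\sum_{\ell\notin W}a_{k-\ell}^p$ (reading $c^W$ as you do), so this sum equals $Q_W^{(D)}(w,w)$ exactly. In your truncated sum only $(i,i-j)=(m,\ell)$ occurs. So the equality you assert in Step~3 is off by that factor, and only $\le$ holds. The $2$ is not produced by your Step~1.

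On the exponent, you are right that the literal prefactor $\bigl(\sum_j a_j^p\bigr)^{1/2}$ cannot be correct for $p<1$. The right-hand side is homogeneous of degree $\frac p2+\frac p2=p$ in $A$ (not $\frac{3p}{2}$), while the left-hand side has degree $1$. The paper's proof in fact yields $\bigl(\sum_{j\ne0}a_j^{2-p}\bigr)^{1/2}$. This matches Remark~\ref{rem:p01_tau}(a) and the comment that \eqref{eq:Weps_tau} literally coincides with \eqref{eq:Weps}. So the $p$ in the statement is a misprint, and with exponent $2-p$ the natural split closes with no normalisation. Your $\sup_j a_j\le1$ device is only needed for the literal version, which without it is false for $p<1$.

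Finally, $c^W\not\equiv0$ does not give $\min\Spec L_W^{(D)}>0$: every $b^W$-component of $W$ must meet $\{c^W>0\}$. This can fail for infinite $G$ with torsion. For example, with $G=\Z\oplus\Z/2\Z$, $A=V_{(0,1)}$ and $W=\{0\}\oplus\Z/2\Z$, one has $c^W\equiv0$ and the constants lie in the kernel.
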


\begin{proof} For $j\in G$ we define $B_j:= \blue{P_0}(V_j-I)P_0$. Let $w\in \ell^2(G)$ with $w\neq 0$ and $\supp w\subset W$. Reasoning as in the proof of Proposition \ref{prop:min_eps_tau1}, we obtain
\begin{align*}
\eps(w,A)\ \leq \ \sqrt{\sum_{j\in G\setminus\{0\}} \Bigl(\frac{\|b^{(j)}\|_\infty + \|b^{(-j)}\|_\infty}{2}\Bigr)^{2-p}}  \sqrt{\frac{\langle Bw, w\rangle}{\|w\|_2^2}},
\end{align*}
where
\[B:= \sum_{j\in G\setminus\{0\}} \Bigl(\frac{\|b^{(j)}\|_\infty + \|b^{(-j)}\|_\infty}{2}\Bigr)^p B_j^HB_j.\]

Further, a short calculation reveals that $B$ restricted to $\im(P_0)=\ell^2(W)$ coincides with $L_W^{(D)}$, which yields the assertion.
\end{proof}

Although \eqref{eq:Weps_tau} literally coincides with \eqref{eq:Weps} for the $\tau_1$ method, the value is different since the graph $(b,c)$ and hence the Laplacian $L_W^{(D)}$ is different.

\begin{remark} \label{rem:p01_tau}
Let us again comment on the special situations $p=0$ and $p=1$.
\begin{enumerate}
 \item[(a)]
 For $p=0$ we observe, with $J:=\{j\in G:\; \|b^{(j)}\|_\infty + \|b^{(-j)}\|_\infty \neq 0\}$,
$$
\eps_0(W,A)\ =\ \frac{1}{2}\sqrt{\sum_{j\in J\setminus\{0\}} \Bigl(\|b^{(j)}\|_\infty + \|b^{(-j)}\|_\infty\Bigr)^2} \sqrt{\min \Spec L_W^{(D)}}.
$$
\item[(b)] 
For $p=1$ we obtain
$$
\eps_1(W,A)\ =\ \sqrt{\sum_{j\in G\setminus\{0\}} \|b^{(j)}\|_\infty} \sqrt{\min \Spec L_W^{(D)}}.
$$
\end{enumerate}
\end{remark}

\subsubsection{Bounds on the lower norm and the pseudospectrum} \label{sec:bounds_tau}
We proceed as in Subsection \ref{sec:bounds_tau1} but with some small changes: Firstly, we do not have the upper bound on $\nu(A)$ and hence not the lower bounds on $\speps A$ and $\Speps A$. And secondly, on the bright side, we need not mention $(A^*)_k$ in our pseudospectral bounds since
\begin{equation} \label{eq:Ak*}
(A^*)_k\ =\ P_kA^*P_k\ =\ P_k^*A^*P_k^*\ =\ (P_kAP_k)^*\ =\ (A_k)^*
\end{equation}
and by the following elementary lemma:
%

\begin{proof}
$A$ and $A^*$ are either a) both singular or b) both invertible. In case a), by standard linear algebra arguments for finite square matrices (note: $\dim Y<\infty$), both have a nontrivial kernel, so that $\nu(A)=0=\nu(A^*)$. In case b), $\nu(A)=\nu(A^*)>0$ by e.g. \cite[Lemma 2.10]{HagLiSei}.
\end{proof}

\begin{corollary}[\bf Lower bound on the lower norm] \label{cor:nu_tau}
Let $A\in \W(E)$ as in \eqref{eq:A} on $\ell^2(G,Y)$ with a countable Abelian group $(G,+)$ and a Hilbert space $Y$,
let a finite truncation window $W\subset G$ and a weight function $w\in\ell^2(G)$ with $w\neq 0$ and $\supp w\subset W$ be given. Then it holds that
\[
\inf_{k\in G}\nu(A_k)\ \le\ \nu(A)\ +\ \eps(w,A),
\]
where the operator patches $A_k$ are defined by \eqref{eq:patchA_tau}. 
Moreover, if $0\leq p\leq 1$ and $A\in \W_p(E)$, then
\[\inf_{k\in G}\nu(A_k)\ \le\ \nu(A)\ +\ \eps_p(W,A),\]
where $\eps_p(W,A)$ is the window-dependent penalty term defined in \eqref{eq:Weps_tau}. 
\end{corollary}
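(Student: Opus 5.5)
The plan mirrors the proof of Corollary \ref{cor:tau1}, with Proposition \ref{prop:tau} taking the place of Proposition \ref{prop:tau1}.

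For the first inequality, take any nonzero $x\in E$. Proposition \ref{prop:tau} supplies a $k\in G$ with $\tilde x_k\ne 0$ satisfying \eqref{eq:patchAx_tau}. Here $A_k=P_kAP_k$ acts on $\im P_k=\ell^2(k+W,Y)$. We need $\tilde x_k=M_{V_kw}x$ to lie in that space, and it does: $(\tilde x_k)_i=w_{i-k}x_i$ vanishes unless $i-k\in\supp w\subset W$, so $\supp\tilde x_k\subset k+W$. The normalised vector $\tilde x_k/\|\tilde x_k\|$ is therefore admissible in the definition of $\nu(A_k)$, which gives
\[
\nu(A_k)\ \le\ \frac{\|A_k\tilde x_k\|}{\|\tilde x_k\|}\ \le\ \frac{\|Ax\|}{\|x\|}+\eps(w,A).
\]
Bounding the left-hand side from below by $\inf_{k\in G}\nu(A_k)$ and then taking the infimum over all nonzero $x$ on the right yields $\inf_k\nu(A_k)\le\nu(A)+\eps(w,A)$.

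For the second inequality, note that the first holds for every admissible $w$, and the left-hand side does not depend on $w$. Taking the infimum over all nonzero $w\in\ell^2(G)$ with $\supp w\subset W$ gives $\inf_k\nu(A_k)\le\nu(A)+\inf_w\eps(w,A)$. Since $A\in\W_p(E)$, Proposition \ref{prop:min_eps_tau} applies and bounds this infimum by $\eps_p(W,A)$ from \eqref{eq:Weps_tau}.

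No step here is a real obstacle; the argument is essentially bookkeeping. The two points needing a word of care are the support check that puts $\tilde x_k$ in the domain of $A_k$, and the requirement $A\in\W_p(E)$ for Proposition \ref{prop:min_eps_tau}, which the statement assumes.
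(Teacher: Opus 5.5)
Your proposal is correct and follows the paper's proof essentially verbatim. You apply Proposition \ref{prop:tau} to each nonzero $x$, bound $\nu(A_k)$ by the quotient $\|A_k\tilde x_k\|/\|\tilde x_k\|$, take infima over $k$ and $x$, then take the infimum over admissible $w$ and invoke \eqref{eq:Weps_tau}; your explicit check that $\supp\tilde x_k\subset k+W$ is a detail the paper leaves implicit.
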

\begin{proof}
For all $x\in \ell^2(G)\setminus\{0\}$ and the corresponding $k\in G$ in \eqref{eq:patchAx_tau}, we have
\[
\nu(A_k)\ \le\ \frac{\|A_{k}\, \tilde x_{k}\|}{\| \tilde x_{k}\|}\ \le\ \frac{\|Ax\|}{\|x\|}\ +\ \eps(w,A).
\]
By passing to $\inf_{k\in G}$ and to $\inf_{x\in \ell^2(G)\setminus\{0\}}$, we conclude the first inequality. The second inequality follows by passing to $\inf_{w\ne 0, \supp w\subset W}$ in the first, followed by application of \eqref{eq:Weps_tau}.
\end{proof}
Again, apply Corollary \ref{cor:nu_tau} to $A$ and $A^*$ and take the minimum of the two window-dependent inequalities, noting that, again, $\eps_p(W,A^*)=\eps_p(W,A)$, by \eqref{eq:Weps_tau}. Doing so, we get
\begin{equation} \label{eq:nuAA*_tau}
\inf_{k\in G}\underbrace{\min\{\nu(A_k),\nu((A^*)_k)\}}_{=\,\mu(A_k)}\ -\ \eps_p(W,A)\ \le\ \min\{\nu(A),\nu(A^*)\}\ =\ \mu(A),
\end{equation}
where the underbrace equality holds by \eqref{eq:Ak*}. Again replace $A$ by $A-\lambda I$ and assume the right-hand side is $<\eps$, resp.~$\le\eps$:
\begin{corollary}[\bf Inclusions for the pseudospectrum] \label{cor:speps_tau}
Let $0\leq p\leq 1$, $A\in \W_p(E)$ as in \eqref{eq:A} on $\ell^2(G,Y)$ with a countable Abelian group $(G,+)$ and a Hilbert space $Y$, and
let a finite truncation window $W\subset G$ be given. Then it holds that
\begin{eqnarray}
\label{eq:tau-incl1}
\speps A&\subset& \qquad\bigcup\limits_{k\in G} \specn_{\eps+\eps_p(W,A)} A_k,\qquad \eps>0\qquad\text{and}\\[0mm]
\label{eq:tau-incl2}
\Speps A& \subset&  \clos\bigcup\limits_{k\in G} \Specn_{\eps+\eps_p(W,A)} A_k,\qquad \eps\ge 0,
\end{eqnarray}
where $\eps_p(W,A)$ is the window-dependent penalty term defined in \eqref{eq:Weps_tau} and the operator patches $A_k$ are defined by \eqref{eq:patchA_tau} and where $\Specn_0A=\Spec A$.
\end{corollary}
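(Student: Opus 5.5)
The plan is to apply \eqref{eq:nuAA*_tau} to $A-\lambda I$ in place of $A$. First I check that this substitution is legitimate and changes nothing else. If $A\in\W_p(E)$, then $A-\lambda I\in\W_p(E)$, since only the diagonal $b^{(0)}$ changes. The penalty \eqref{eq:Weps_tau} involves only $j\ne 0$, both in the prefactor and in the graph $(b^W,c^W)$, so $\eps_p(W,A-\lambda I)=\eps_p(W,A)$. Moreover $(A-\lambda I)_k=P_k(A-\lambda I)P_k=A_k-\lambda I_k$ on $\ell^2(k+W,Y)$. As in Corollary \ref{cor:speps_tau1}, $(\lambda I)^*=\lambda I$, so $(A-\lambda I)^*=A^*-\lambda I$ and \eqref{eq:Ak*} still applies. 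Hence, for every $\lambda\in\C$,
\[
\inf_{k\in G}\mu(A_k-\lambda I_k)\ -\ \eps_p(W,A)\ \le\ \mu(A-\lambda I).
\]

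For the open inclusion \eqref{eq:tau-incl1}, let $\lambda\in\speps A$, so that $\mu(A-\lambda I)<\eps$. The inequality above gives $\inf_k\mu(A_k-\lambda I_k)<\eps+\eps_p(W,A)$. A strict inequality for an infimum yields a single $k\in G$ with $\mu(A_k-\lambda I_k)<\eps+\eps_p(W,A)$, that is, $\lambda\in\specn_{\eps+\eps_p(W,A)}A_k$ by \eqref{eq:speps}. The patches are square operators on $\ell^2(k+W,Y)$, so \eqref{eq:speps} is the relevant definition here, not \eqref{eq:speps.rect}.

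The closed inclusion \eqref{eq:tau-incl2} is the only delicate step, because a non-strict inequality for an infimum need not be attained. I will handle it by approximation.

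\textbf{Case $\eps>0$.} By \eqref{eq:closspeps}, $\Speps A=\clos\speps A$. Applying \eqref{eq:tau-incl1} gives
\[
\speps A\subset\bigcup_k\specn_{\eps+\eps_p(W,A)}A_k\subset\bigcup_k\Specn_{\eps+\eps_p(W,A)}A_k .
\]
Taking closures finishes this case.

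\textbf{Case $\eps=0$.} Let $\lambda\in\Spec A$. Then $\mu(A-\lambda I)=0$, so $\inf_k\mu(A_k-\lambda I_k)\le\eps_p(W,A)$. For each $\delta>0$ there is therefore a $k$ with $\lambda\in\specn_{\eps_p(W,A)+\delta}A_k$. This does not place $\lambda$ directly in $\Specn_{\eps_p(W,A)}A_k$.

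A more robust route is to choose $\lambda_m\to\lambda$ with $\mu(A-\lambda_m I)>0$ small. This cannot be done by the Lipschitz property \eqref{eq:Lipschitz} alone; the natural remedy is to exploit that pseudospectral level sets have no interior points of "flat minimum". By the Globevnik/Shargorodsky results behind \eqref{eq:closspeps}, the function $\lambda\mapsto\mu(B-\lambda I)$ has no local minimum with positive value on open sets for the square patch operators $B=A_k$ (Hilbert space setting). Combining this with the $\delta$-approximation should give $\lambda\in\clos\bigcup_k\Specn_{\eps_p(W,A)}A_k$.

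An alternative for $\eps_p(W,A)>0$ is to work with the $\eps_p$ level directly. Since $\Specn_{\eps_p}A_k=\clos\specn_{\eps_p}A_k$ by \eqref{eq:closspeps} applied to $A_k$, every point of $\specn_{\eps_p+\delta}A_k$ lies within a distance $r(\delta)$ of $\Specn_{\eps_p}A_k$. The obstacle is that $r(\delta)$ must be controlled uniformly in $k$. I expect this uniformity to be the main difficulty. I would try to obtain it via the standard inclusion $\specn_{\eps'}B\subset\specn_{\eps}B+\mathcal{D}$, where $\mathcal{D}$ is a disc whose radius is controlled by $\eps'-\eps$ and $\sup_k\|A_k\|\le\|A\|$. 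If that fails, I would fall back on $\Speps A\subset\bigcap_{\delta>0}\clos\bigcup_k\specn_{\eps+\eps_p+\delta}A_k$ and argue that this intersection equals the right-hand side of \eqref{eq:tau-incl2}.

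If $\eps_p(W,A)=0$, the closed statement for $\eps=0$ reduces to the spectral inclusion. This case cannot occur, though, because $\min\Spec L_W^{(D)}>0$.
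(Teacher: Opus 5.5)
Your proof of \eqref{eq:tau-incl1} is correct, and so is your proof of \eqref{eq:tau-incl2} for $\eps>0$. It is exactly the paper's route: substitute $A-\lambda I$ into \eqref{eq:nuAA*_tau}, read off the open inclusion, then take closures using \eqref{eq:closspeps}. The gap is the case $\eps=0$, which the paper flags as the genuinely new content of \eqref{eq:tau-incl2}. You correctly reduce it to the following claim with $\eta:=\eps_p(W,A)>0$: if $\inf_k\mu(A_k-\lambda I_k)\le\eta$, then $\lambda\in\clos\bigcup_k\Specn_\eta A_k$. You also correctly see that this needs control uniform in $k$, but none of your suggestions supplies it.
\begin{itemize}
\item Applying the Globevnik--Shargorodsky property to each square patch $A_k$ separately only gives a $k$-dependent radius $r_k(\delta)\to0$.
\item The ``standard inclusion'' you invoke is the wrong way round. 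What is standard is $\specn_\eps B+\{|z|<\eps'-\eps\}\subset\specn_{\eps'}B$. A reverse inclusion with radius depending only on $\eps'-\eps$ and $\|B\|$ is not an off-the-shelf fact. For $\eps=0$ it is false: the $n\times n$ Jordan block $J_n$ has $\|J_n\|=1$ and $\Spec J_n=\{0\}$, yet $\specn_{\eps'}J_n\supset\{|z|<\eps'^{1/n}\}$.
\item Your fallback merely restates the claim. Since $\lambda\mapsto\inf_k\mu(A_k-\lambda I_k)$ is $1$-Lipschitz, $\bigcap_{\delta>0}\clos\bigcup_k\specn_{\eta+\delta}A_k$ is exactly $\{\lambda:\inf_k\mu(A_k-\lambda I_k)\le\eta\}$.
\end{itemize}

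The missing ingredient is the identity \eqref{eq:3.1_tau} at level $\eta=\eps+\eps_p(W,A)$, which the paper takes from the argument of \cite[Proposition 3.1]{CW.Heng.ML:JST}. A short self-contained proof replaces the family by a single operator. Let $B:=\bigoplus_{k\in G}A_k$ on the Hilbert space $\bigoplus_{k\in G}\ell^2(k+W,Y)\cong\ell^2(G\times W,Y)$. It is bounded since $\|A_k\|\le\|A\|$, and $B^*=\bigoplus_k A_k^*$. Using $\|(B-\lambda I)x\|^2=\sum_k\|(A_k-\lambda I_k)x_k\|^2$, and testing with vectors supported in a single summand, you get $\nu(B-\lambda I)=\inf_k\nu(A_k-\lambda I_k)$. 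The same holds for the adjoints, so $\mu(B-\lambda I)=\inf_k\mu(A_k-\lambda I_k)$. Hence, for every $\eta>0$,
\[
\Big\{\lambda:\inf_{k\in G}\mu(A_k-\lambda I_k)\le\eta\Big\}=\Specn_\eta B=\clos\specn_\eta B=\clos\bigcup_{k\in G}\specn_\eta A_k\subset\clos\bigcup_{k\in G}\Specn_\eta A_k .
\]
The middle equality is \eqref{eq:closspeps} for the single operator $B$, and this is where the uniformity in $k$ comes from. Taking $\eta=\eps+\eps_p(W,A)$, which is positive even when $\eps=0$, gives all of \eqref{eq:tau-incl2} at once.

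When $\dim Y<\infty$ there is an alternative route to your uniform $r(\delta)$: compactness, since all $A_k$ are then matrices of one fixed size with norm at most $\|A\|$. This would be combined with the fact that $\lambda\mapsto\mu(C-\lambda I)$ has no local minima at positive values. The statement, however, allows $\dim Y=\infty$, where compactness is unavailable.
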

Note how, in contrast to Corollary \ref{cor:speps_tau1} for the $\tau_1$ method, here
\begin{equation} \label{eq:3.1_tau}
\big\{\lambda\in\C: \inf_{k\in G}\mu(A_k-\lambda I_k)\le\eps\big\}\ =\ \clos\bigcup_{k\in G} \Specn_{\eps+\eps_p(W,A)} A_k,
\end{equation}
see  the argument in (the proof of) \cite[Proposition 3.1]{CW.Heng.ML:JST}.
Also note that $\Specn_{\eps+\eps_p(W,A)} A_k$ can be replaced by $\specn_{\eps+\eps_p(W,A)} A_k$ in \eqref{eq:tau-incl2} since $\clos\!\cup_k \clos S_k=\clos\!\cup_k S_k$ for any family of sets $S_k$. With this modification, \eqref{eq:tau-incl2} simply follows from \eqref{eq:tau-incl1} by taking the closure on both sides. The important case $\eps=0$, however, is new in \eqref{eq:tau-incl2}.

To study the potential convergence of the $\tau$ enclosures from Corollary \ref{cor:speps_tau} to the pseudospectrum of $A$, again take a sequence $W_1, W_2,\dots$ of finite subsets of $G$ such that $W_n\subseteq W_{n+1}$ for all $n\in\N$ and $\bigcup_{n\in\N} W_n = G$. Because of our focus on asymptotics with respect to $n$, let us briefly bring $n$ back to the notation of the operator patches:
\begin{equation} \label{eq:Akn}
A_{k,n}\ :=\ P_{k,n}AP_{k,n},
\quad\text{where}\quad
P_{k,n}x\ :=\ \chi_{k+W_n}\cdot x,
\qquad k\in G,\ n\in\N.
\end{equation}
To compensate for the absence of Lemma \ref{lem:upper_tau1} in the $\tau$ method, we make one more assumption here:

We say that the $\tau$ patches $(A_{k,n})_{k\in G, n\in\N}$ of $A$ {\sl do not suffer from spectral pollution} if
\begin{equation} \label{eq:nopoll}
\forall\eps>0:\quad \limsup_{n\to\infty} \bigcup_{k\in G} \Speps A_{k,n}\ \subset\ \Speps A.
\end{equation}
Under this additional assumption, we have convergence also of the $\tau$ enclosure sets to the pseudospectrum:

\begin{corollary}[\bf Convergence of the $\tau$ inclusions to the pseudospectrum] \label{cor:conv_tau}
Let $0\leq p\leq 1$, $A\in\W_p(E)$.
Assume that the graph $(b^G,c^G)$ over $G$ from Proposition \ref{prop:min_eps_tau} has subsexponential growth and is connected.
Let $\eps>0$ and 
$(W_n)_{n\in\N}$ be a sequence of finite subsets of $G$ such that $W_n\subseteq W_{n+1}$ for all $n\in\N$ and $\bigcup_{n\in\N} W_n = G$. If the $\tau$ patches $(A_{k,n})_{k\in G, n\in\N}$ from \eqref{eq:Akn} do not suffer from spectral pollution
then the upper inclusion bounds from Corollary \ref{cor:speps_tau} Hausdorff-converge as $n\to\infty$ to $\Speps A$.
\end{corollary}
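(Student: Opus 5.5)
The plan follows the proof of Corollary~\ref{cor:conv_tau1}. The one new ingredient, replacing Lemma~\ref{lem:upper_tau1}, is the no-pollution assumption \eqref{eq:nopoll}. Write $\eps_n:=\eps_p(W_n,A)$ for the penalty from \eqref{eq:Weps_tau} and $S_n:=\clos\bigcup_{k\in G}\Specn_{\eps+\eps_n}A_{k,n}$ for the upper inclusion set in \eqref{eq:tau-incl2}.

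\textbf{Step 1: the penalty vanishes.} We show $\eps_n\to0$. Proposition~\ref{prop:min_eps_tau} uses the graph $(b^{W_n},c^{W_n})$. I would first check that its Dirichlet Laplacian coincides with the Dirichlet Laplacian $L^{(D)}_{W_n}$ of the full graph $(b^G,0)$, i.e.\ the $\tau_1$ graph of Proposition~\ref{prop:min_eps_tau1}. The killing term $c^{W}$ is exactly $d_W$ from Section~\ref{sec:graph_laplacians}, so this should hold. If the two differ only by a bounded perturbation whose quadratic form is dominated by that of the $\tau_1$ graph, it still suffices to bound the ground state energy by the Rayleigh quotient of $\mathcal Q_{b^G,0}$ on $W_n$. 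Either way, subexponential growth, connectedness, \cite[Theorem 13.8]{KellerLenzWojciechowskiBook} and \cite[Lemma 1.21]{KellerLenzWojciechowskiBook} give $\min\Spec L^{(D)}_{W_n}\to\min\Spec L=0$, exactly as in Corollary~\ref{cor:conv_tau1}. Hence $\eps_n\to0$. I expect this identification of the $\tau$-graph Laplacian with the global Dirichlet Laplacian to be the main technical point.

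\textbf{Step 2: the two inclusions.} The lower inclusion is \eqref{eq:tau-incl2}:
\[
\Speps A\subset S_n .
\]
For the upper one, fix $\delta>0$ and take $n$ large enough that $\eps_n<\delta$. Then $S_n\subset\clos\bigcup_k\Specn_{\eps+\delta}A_{k,n}$, because closed pseudospectra are monotone in the parameter. Combining this with \eqref{eq:nopoll} at level $\eps+\delta$ and with \eqref{eq:liminf_clos} gives
\[
\limsup_n S_n\subset \Specn_{\eps+\delta}A .
\]
Intersecting over $\delta>0$, and using that $\bigcap_{\delta>0}\Specn_{\eps+\delta}A=\Speps A$ by continuity of $\lambda\mapsto\mu(A-\lambda I)$, we obtain $\limsup_n S_n\subset\Speps A$.

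\textbf{Step 3: conclude.} From $\Speps A\subset S_n$ for every $n$ we get $\Speps A\subset\liminf_n S_n$. Together with Step~2 this yields
\[
\Speps A\subset\liminf_n S_n\subset\limsup_n S_n\subset\Speps A .
\]
The sets $S_n$ are uniformly bounded, since $\|A_{k,n}\|\le\|A\|$ and the parameter stays bounded. So \eqref{eq:Haus_liminf_limsup} gives $S_n\to\Speps A$. By \eqref{eq:closspeps} the same limit holds for the open version $\bigcup_k\specn_{\eps+\eps_n}A_{k,n}$ in \eqref{eq:tau-incl1}, which has the same closure.
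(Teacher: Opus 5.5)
Your proof is correct. Steps~2 and~3 follow the paper's chain of inclusions, with two small differences. Where the paper lets $m\to\infty$ and appeals to continuity of $\eps\mapsto\Speps A$, you intersect over $\delta>0$. The identity $\bigcap_{\delta>0}\Specn_{\eps+\delta}A=\Speps A$ is immediate from the definition, so no continuity is needed. You also usefully add the uniform boundedness of $S_n$ and the treatment of the open version \eqref{eq:tau-incl1}.

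The genuine difference is Step~1. The paper never compares with the $\tau_1$ graph. It writes $L^{(D)}_{W_n}=L_{b^G_{W_n},0}+M_{c^{W_n}_{W_n}}$ and uses only that $c^{W_n}\to0$ pointwise with a uniform bound. Via \cite[Lemma 1.21]{KellerLenzWojciechowskiBook} and the second resolvent identity, it proves that these $W_n$-dependent operators converge in generalised strong resolvent sense to $L^{(D)}_{b^G,0}$. It then concludes as in Corollary~\ref{cor:conv_tau1}.

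Your identification is justified. Since $b^W$ vanishes off $W\times W$, the Dirichlet form of $(b^W,c^W)$ on $W$ is $\mathcal Q_{b^G_W,\,c^W_W}$, with no extra boundary term. The Dirichlet form of $(b^G,0)$ is $\mathcal Q_{b^G_W,\,d_W}$. The definition of $c^W$ is meant to give $c^W(k)=\sum_{\ell\notin W}b^G(k,\ell)=d_W(k)$ for $k\in W$, so the two forms coincide. Moreover $(b^G,c^G)=(b^G,0)$ is exactly the $\tau_1$ graph, so the growth and connectedness hypotheses transfer. Consider instead the killing term that the computation of $\langle Bw,w\rangle$ with $B_j=P_0(V_j-I)P_0$ actually produces, namely $\tfrac12 d_W$. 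Then your form-domination fallback applies, so Step~1 reduces to the $\tau_1$ case either way.

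Your route is shorter and purely variational. It needs only the bottom of the spectrum and its monotonicity under form domination. The paper's resolvent argument gives more, namely strong resolvent convergence and hence approximation of every point of $\Spec L$. It is also insensitive to the precise form of $c^{W}$, needing only uniform boundedness and pointwise decay.
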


Since our graph parameters $b=b^W$ and $c=c^W$ from Proposition \ref{prop:min_eps_tau} depend on the choice of $W$, we have to argue a bit for the (generalised) strong resolvent convergence as $n\to \infty$. Apart from this, the proof is very similar to the proof of Corollary \ref{cor:conv_tau1}.

\begin{proof}
We start by showing that $\eps_p(W_n,A)\to 0$ as $n\to\infty$.
For $n\in\N$ note that $b^{W_n} = b^G_{W_n}$ on $W_n\times W_n$, and $c^{W_n}(k)\to 0$ for all $k\in G$ as well as $\|c^{W_n}(\cdot)\|_\infty \leq 2w_p(A)$. This yields $M_{c^{W_n}}\to 0$ strongly and also $M_{c^{W_n}_{W_n}}\to 0$ strongly.
Moreover, $L_{W_n}^{(D)} = L_{b^G_{W_n},c^{W_n}_{W_n}} = L_{b^G_{W_n},0} + M_{c^{W_n}_{W_n}}$. Note that $L_{b^G_{W_n},0} = L_{W_n}^{(D)}$ for $c=0$, and hence,
by \cite[Lemma 1.21]{KellerLenzWojciechowskiBook}, for $\alpha>0$ we observe $(L_{b^G_{W_n},0}+\alpha)^{-1}\to (L_{b^G,0}^{(D)}+\alpha)^{-1}$ strongly. By the second resolvent identity we have
\[(L_{W_n}^{(D)}+\alpha)^{-1} - (L_{b^G,0}^{(D)}+\alpha)^{-1} = (L_{W_n}^{(D)}+\alpha)^{-1}\bigl(L_{b^G,0}^{(D)} - L_{b^G_{W_n},0} - M_{c^{W_n}_{W_n}}\bigr) (L_{b^G,0}^{(D)}+\alpha)^{-1}.\]
Note that for $f\in \ell^2(G)$ and $k\in G$ we obtain
\[L_{b^G,0}^{(D)} f_k - L_{b^G_{W_n,0}} f_k = \begin{cases} \sum_{\ell\in G\setminus W_n} b^g(k,\ell)(f_k-f_\ell) & k\in W_n,\\
L_{b^G,0}^{(D)} f_k & k\notin W_n.
\end{cases}\]
Since $\sum_{\ell\in G\setminus W_n}b^G(k,\ell)|f_k-f_\ell| \leq 2\sum_{\ell\in G\setminus W_n} \left(\frac{\|b^{(k-\ell)}\|_\infty + \|b^{(\ell-k)}\|_\infty}{2}\right)^p 2\|f\|_\infty \to 0$ (as $w_p(A)<\infty$) and $\sum_{k\in G\setminus W_n} |L_{b^G,0}^{(D)} f_k|^2 \to 0$ (as $L_{b^G,0}^{(D)} f\in \ell^2(G)$), we obtain
\[L_{b^G,0}^{(D)} f - L_{b^G_{W_n,0}} f \to 0\]
in $\ell^2(G)$.
This then yields $(L_{W_n}^{(D)}+\alpha)^{-1} \to (L_{b^G,0}^{(D)}+\alpha)^{-1}$ strongly.
Now, since $\min\Spec L_{b^G,0}^{(D)} = 0$ by \cite[Theorem 13.8]{KellerLenzWojciechowskiBook}, we again obtain $\min\Spec L_{W_n}^{(D)}\to 0$ and hence $\eps_p(W_n,A)\to 0$ as $n\to\infty$, arguing as in the proof of Corollary \ref{cor:conv_tau1}.

Now let $\eps>0$ and fix $m\in\N$. By $\eps_p(W_n,A)\to 0$,  we have 
\begin{equation} \label{eq:eps<1m}
\eps_p(W_n,A)\ \le\ \frac1m
\qquad\text{for all sufficiently large } n.
\end{equation}
Then we conclude that
\begin{align*}
\Speps A &=\ \liminf_{n\to\infty} \Speps A\ \stackrel{\eqref{eq:tau-incl2}}\subset\ \liminf_{n\to\infty}\clos\bigcup\limits_{k\in G} \Specn_{\eps+\eps_p(W_n,A)} A_{k,n}\\
&\stackrel{\eqref{eq:liminf_clos}}=\ \liminf_{n\to\infty}\bigcup\limits_{k\in G} \Specn_{\eps+\eps_p(W_n,A)} A_{k,n}\ \subset\ \limsup_{n\to\infty} \bigcup\limits_{k\in G} \Specn_{\eps+\eps_p(W_n,A)} A_{k,n}\\ 
&\stackrel{\eqref{eq:eps<1m}}\subset\ \limsup_{n\to\infty} \bigcup\limits_{k\in G} \Specn_{\eps+\frac1m} A_{k,n}
\ \stackrel{\eqref{eq:nopoll}}\subset\ \Specn_{\eps+\frac1m}A
\end{align*}
for all $m\in\N$. By the continuity of $\eps\mapsto \Speps A$, all sets in this chain of inclusions Hausdorff-converge to $\Speps A$ as $m\to\infty$;
in particular, abbreviating $S_n:=\cup_k \Specn_{\eps+\eps_p(W_n,A)} A_{k,n}$, we have $\Speps A=\liminf_n S_n=\limsup_n S_n$, so that $S_n\to \Speps A$ as $n\to\infty$, by \eqref{eq:Haus_liminf_limsup}.
\end{proof}

\section{Examples}
\label{sec:examples}

In this section we will discuss a few examples.

\subsection{Comparison with our 1D result from \texorpdfstring{\cite{CW.Heng.ML:JST}}{\ref{bib-CW.Heng.ML:JST}}}
In the setting and notations of \cite{CW.Heng.ML:JST} we have $G=\Z$, $b^{(j)} \neq 0$ if and only if $J=\{-1,0,1\}$ and $(b^{(1)},b^{(0)},b^{(-1)})=(\alpha,\beta,\gamma)$.

To evaluate the penalty term \eqref{eq:pen_tau1} for the $\tau_1$ method in this 1D situation, note that $V_0=I$ and that, with $B:=(V_{-1}-I)|_{\im P_0}$ and $C:=(V_1-I)|_{\im P_0}$, we have $B^*B=C^*C$, so that both $B$ and $C$ have the same smallest singular value, say $s$, and the same corresponding singular vector, say $w$. So the expression for the minimum of \eqref{eq:pen_tau1} simplifies to $(\|\alpha\|_\infty+\|\gamma\|_\infty)s$, where $s=\frac{\|Bw\|}{\|w\|}=\frac{\|Cw\|}{\|w\|}$, which is exactly the minimum of the penalty term (5.2) in \cite[Proposition 5.1]{CW.Heng.ML:JST}.

The penalty term \eqref{eq:pen_tau} for the $\tau$ method, again since $V_0=I$, in this 1D situation coincides with the penalty term (3.9) in \cite[Proposition 3.3]{CW.Heng.ML:JST} since $T_n^-=\|P_0V_1w-w\|_2^2$, $T_n^+=\|P_0V_{-1}w-w\|_2^2$ and $S_n=\|w\|_2^2$.

\subsection{The case \texorpdfstring{$G=\Z^d$}{G=Zd}}\label{sec:Zd}

One possible generalisation of \cite{CW.Heng.ML:JST}, important for applications, is the multidimensional case, i.e.\ $G:=\Z^d$, which we will focus in this subsection.

We start with a special case of band operators.
Let $J:=\{k\in \Z^d:\; \|k\|_1\leq 1\} = \{0\}\cup\{\pm e_1,\ldots,\pm e_d\}$ for the standard unit vectors $e_l = (\delta_{l,m})_{m\in \{1\,\ldots,d\}}$, and let $p=0$ and $A\in\W_0(\ell^2(G))$ (i.e.\ $A$ is a band matrix) with $b^{(j)} \neq 0$ only if $j\in J$. In particular, for $d=1$ the infinite matrix $A$ is tridiagonal.

Moreover, let $W_n:=\{1,\ldots,n\}^d$ for $n\in\N$. Then it is easy to see that $L_{W_n}^{(D)}$ in Proposition \ref{prop:min_eps_tau1} agrees with two times the standard finite difference discrete Laplacian for the standard Dirichlet Laplacian on $[0,n+1]^d$ with interior grid points given by $W_n$.
Then, as is well known, the minimal eigenvalue is given by
\[\min \Spec L_{W_n}^{(D)} = 8d \sin\Bigl(\frac{\pi}{2(n+1)}\Bigr)^2,\]
so, by Remark \ref{rem:p01},
\begin{align*}
    \varepsilon_0(W_n,A) & = \frac{1}{2}\sqrt{\sum_{j\in J\setminus\{0\}} \left(\|b^{(j)}\|_\infty+\|b^{(-j)}\|_\infty\right)^2} \cdot 2\sqrt{2d}\sin\Bigl(\frac{\pi}{2(n+1)}\Bigr) \sim \frac{1}{n}
\end{align*}
for large $n$. Note that this bound is realised by the choice $w=\big(\prod_{i=1}^d\sin(\frac{l_i \pi}{n+1})\big)_{(l_1,\ldots,l_d)\in W_n}$, which is the corresponding eigenfunction of $L_{W_n}^{(D)}$.

For the choice of a sharp cutoff $w:=\chi_W$, we can obtain
\[\frac{\langle L_{W_n}^{(D)}w, w\rangle}{\|w\|_2^2} = \frac{2(n^d-(n-2)^d)}{n^d} \sim \frac{4dn^{d-1}}{n^d} = \frac{4d}{n},\]
and thus
\[\varepsilon(w,A) \sim \frac{1}{2}\sqrt{\sum_{j\in J\setminus\{0\}} \left(\|b^{(j)}\|_\infty+\|b^{(-j)}\|_\infty\right)^2} \cdot 2\frac{\sqrt{d}}{\sqrt{n}},\]
so the decay in $n$ is much slower in this case.

Specialising to the case $d=1$, we essentially recover the results in \cite[Corollary 5.4]{CW.Heng.ML:JST} for $\varepsilon_0(W,A)$ apart from the additional factor $\sqrt{2d} = \sqrt{2}$ and the $\ell^2$-sum of the norms of the diagonals $(b^{(j)})_j$ rather than the $\ell^1$-sum (which comes from the application of the Cauchy--Schwarz inequality). This can be seen as a consequence of the estimate of the $\ell^1$-sum of the diagonals by their $\ell^2$-sum, which yields the additional factor $\sqrt{2}$.

Moreover, also the corresponding result in \cite[Proposition 5.1]{CW.Heng.ML:JST} for $\varepsilon(\chi_W,A)$ can be recovered up to analogous adjustments.

Now, let $A\in \W_0(E)$ be a band operator. By introducing a suitable block structure with block sizes corresponding to the band width, we can reduce the general case of a band operator to the special case above; see \cite[Section 1.4]{CW.Heng.ML:JST} for the case $d=1$. In particular, $\eps_0(W_n,A)\sim \frac{1}{n}$ for all band operators $A\in \W_0(E)$.

\subsection{Finitely generated groups}

Let $G$ be a finitely generated Abelian group. Then, according to \eqref{eq:fundthm}, $G \cong \Z^d\oplus H$, where $H$ is a finite sum of cyclic groups $\Z/q\Z$ with $q\in \N$. Then, we observe 
\[
\ell^2(G)\ \cong\ \ell^2(\Z^d)\otimes \ell^2(H) \ \cong\ \ell^2(\Z^d,\ell^2(H))
\] 
and hence, for a Hilbert space $Y$, 
\[
\ell^2(G,Y)\ \cong\ \ell^2(G)\otimes Y\ \cong\ \ell^2(\Z^d) \otimes \ell^2(H)\otimes Y\ \cong\ \ell^2(\Z^d, \ell^2(H)\otimes Y),
\]
so this case actually behaves as the case in Subsection \ref{sec:Zd}.

In particular, if $A\in \W_0(E)$ is a band operator, then $A$ is also band operator considered as operator on $\ell^2(\Z^d, \ell^2(H)\otimes Y)$, so also in this case we obtain the $\frac{1}{n}$ rate for the window-dependent truncation error.

\subsection{On minimising the window-dependent truncation error w.r.t.\ \texorpdfstring{$p$}{p}}

In \eqref{eq:Weps} in Proposition \ref{prop:min_eps_tau1} as well as in \eqref{eq:Weps_tau} in Proposition \ref{prop:min_eps_tau} the window-dependent truncation error $\eps_p(W,A)$ depends on the choie of $p\in [0,1]$. We now show that in different examples different values for $p$ may be optimal.

\begin{example}\label{ex:Z2p=1}
    Let $G:=\Z^2$ and $A\in \W_0(\ell^2(\Z^2))$ as in \eqref{eq:A} with $b^{(j)}\neq 0$ if and only if $j=(\pm 1,0)$ or $j=(0,\pm 1)$. Let us assume that $\|b^{(1,0)}\|_\infty + \|b^{(-1,0)}\|_\infty=2$ and $\|b^{(0,1)}\|_\infty + \|b^{(0,-1)}\|_\infty=2\alpha>0$. Moreover, as in Subsection \ref{sec:Zd}, let $W:=\{1,\ldots,n\}^2$ for some $n\in\N$. Then, for the $\tau_1$ method and $0\leq p\leq 1$ we observe, by \eqref{eq:Weps},
    \[\eps_p(W,A) = \sqrt{2(1+\alpha^{2-p})} \sqrt{\min \Spec L_W^{(D)}},\]
    where, as in Proposition \ref{prop:min_eps_tau1}, 
    \[L_W^{(D)} = P_{(0,0)}\left(2(2I-V_{(1,0)} - V_{(-1,0)}) + 2\alpha^p (2I-V_{(0,1)}-V_{(0,-1)})\right)P_{(0,0)}.\]
    Thus, for $\alpha\neq 1$, this is an anisotropic finite difference Laplacian on $[0,n+1]^2$ with Dirichlet boundary conditions, and hence
    \[\min\Spec L_W^{(D)} = 4\sin\left(\frac{\pi}{2(n+1)}\right)^2 (2+2\alpha^p).\]
    Hence,
    \[\eps_p(W,A) = 4 \sqrt{1+\alpha^{2-p}} \sqrt{1+\alpha^p} \sin\left(\frac{\pi}{2(n+1)}\right)=
    4\sqrt{1+\alpha^2+\alpha(\beta_p+\beta_p^{-1})} \sin\left(\frac{\pi}{2(n+1)}\right)\]
    with $\beta_p:=\alpha^{1-p}$. Since $t+t^{-1}\ge 2$ for any $t>0$ with equality iff $t=1$, the minimal value of $\eps_p(W,A)$ w.r.t.\ $p$ is attained for $\beta_p=1$, i.e.~for $p=1$ (since $\alpha\ne 1$), that is,\
    \[\min_{p\in[0,1]}\eps_p(W,A)\ =\ \eps_1(W,A)\ =\ 4(1+\alpha) \sin\left(\frac{\pi}{2(n+1)}\right).\]
\end{example}

We now adjust Example \ref{ex:Z2p=1} in order to see that the optimal value for $p$ may depend on the particular operator $A$ and that the optimal $p$ may indeed be an interior point of $[0,1]$. 
\begin{example}
    Again, let $G:=\Z^2$, and let $A\in \W_0(\ell^2(\Z^2))$ as in \eqref{eq:A} with $b^{(j)}\neq 0$ if and only if $j=(\pm 1,0)$ or $j=(0,\pm 2)$. Let $\|b^{(1,0)}\|_\infty + \|b^{(-1,0)}\|_\infty=2\alpha_1>0$ and $\|b^{(0,2)}\|_\infty + \|b^{(0,-2)}\|_\infty=2\alpha_2>0$. Moreover, similarly as in Subsection \ref{sec:Zd}, let $W:=\{1,\ldots,n\}^2$ for some $n\in\N$. Then, for the $\tau_1$ method and $0\leq p\leq 1$, we observe, by \eqref{eq:Weps},
    \[\eps_p(W,A) = \sqrt{2(\alpha_1^{2-p} + \alpha_2^{2-p}))} \sqrt{\min \Spec L_W^{(D)}},\]
    where, as in Proposition \ref{prop:min_eps_tau1}, 
    \[L_W^{(D)} = P_{(0,0)}\left(2\alpha_1^p(2I-V_{(1,0)} - V_{(-1,0)}) + 2\alpha_2^p (2I-V_{(0,2)}-V_{(0,-2)})\right)P_{(0,0)}.\]
    Thus, $L_W^{(D)}$ is a tensor product of the two one-dimensional operators
    \begin{align*}
        L^j & := P_0 (2\alpha_j^p (2I-V_j-V_{-j}))P_0, \qquad j\in\{1,2\}.
    \end{align*}
    For $j\in\{1,2\}$, let $w^j\in \ell^2(\{1,\ldots,n\})$ be the ground state for $L^j$ and $\lambda_j$ the ground state energy.
    Then 
    \[\min \Spec L_W^{(D)} = \lambda_1+\lambda_2,\]
    and the ground state is given by $w:=w^1\otimes w^2$.
    We now fine-tune parameters. In order to do this, for $j\in\{1,2\}$, let 
    \[\widetilde{L}^j:=P_0 (2(2I-V_j-V_{-j}))P_0,\]
    such that $L^j = \alpha_j^p \widetilde{L}^j$; hence the ground state is still $w_j$ but the ground state energy is now $\widetilde{\lambda}_j = \frac{1}{\alpha_j^p} \lambda_j$.
    We now let $k>0$ and set $\alpha_j:= \widetilde{\lambda}_j^k$ for $j\in\{1,2\}$.
    Then 
    \begin{align*}
    \eps_p(W,A) &\ =\ \sqrt{2} \sqrt{\widetilde{\lambda}_1^{2k+1} + \widetilde{\lambda}_2^{2k+1} +\widetilde{\lambda}_1^{2k-kp} \widetilde{\lambda}_2^{1+kp} + \widetilde{\lambda}_1^{1+kp} \widetilde{\lambda}_2^{2k-kp}}\\
    &\ =\ \sqrt{2}\sqrt{\widetilde{\lambda}_1^{2k+1} + \widetilde{\lambda}_2^{2k+1} + (\widetilde{\lambda}_1\widetilde{\lambda}_2)^{\frac{2k+1}2}(\gamma_p+\gamma_p^{-1})},
    \end{align*}
    where $\gamma_p:=(\widetilde{\lambda}_1/\widetilde{\lambda}_2)^{k-\frac 12 - kp}$.
    Hence, the minimal value w.r.t. $p$ is attained for $\gamma_p=1$, that is when $k-\frac 12-kp=0$, i.e.~$p=\frac{2k-1}{2k}$. Via the choice of $k>\frac 12$, this optimal $p$ could be any interior point of $[0,1]$. Summarising,
    \[
    \min_{p\in[0,1]}\eps_p(W,A)\ =\ \eps_{\frac{2k-1}{2k}}(W,A)\ =\ \sqrt{2} \left(\widetilde{\lambda}_1^{\frac{2k+1}{2}} + \widetilde{\lambda}_2^{\frac{2k+1}{2}}\right).
    \]
    \end{example}

\section{Directions of extension} \label{sec:extensions}
To keep the exposition as simple as possible up to this point, we have refrained from incorporating the following technical extensions so far.

\subsection{Banach space-valued \texorpdfstring{$\ell^2$}{l2} spaces}
\label{subsec:Banach space-valued}
Instead of $Y$ being a Hilbert space, we can consider that case that $Y$ is a complex Banach space.

One peculiarity when passing from Hilbert space valued to Banach space valued sequences is that some of the level sets of the resolvent norm function $\lambda\mapsto \|(A-\lambda I)^{-1}\|$ of a bounded linear operator $A$ on $\ell^2(G,Y)$ can turn from curves in the complex plane into sets that contain a nonempty open set. In the latter case, the function $\eps\mapsto \speps A$ has a jump at such a level $\eps>0$ and equality \eqref{eq:closspeps} fails to hold. In these cases some of our arguments break down or need adapting. Here are the details:

We say that a complex Banach space $E$ has {\sl Globevnik's property} if $E$ is finite-dimensional or if $E$ is complex uniformly convex (see \cite[Definition 2.4(ii)]{Shargo08}) or if its dual space $E^*$ is complex uniformly convex. If $E$ has Globevnik's property then, for bounded linear operators $A$ on $E$, the level sets of the resolvent norm function are curves without nonempty open subsets by \cite{Globevnik74,Globevnik76,Shargo08,Shargo09}, so that the function $\eps\mapsto \speps A$ is Hausdorff-continuous and \eqref{eq:closspeps} holds; otherwise these properties might fail, see e.g.~\cite{Shargo08,Shargo09}. 

Hilbert spaces have Globevnik’s property (this is what we treated so far), and $\ell^2(G,Y)$ has it if $Y$ has it \cite{BoykoKadets}.

\begin{proposition} \label{prop:Globevnik}
Let $Y$ be a complex Banach space and $G$ countable Abelian group.
If $Y$ has Globevnik's property then all our arguments, equations and theorems extend to $\ell^2(G,Y)$. 
%
\end{proposition}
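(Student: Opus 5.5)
The plan is to go through the paper's arguments in order and mark every place where the Hilbert structure of $Y$ (as opposed to only its Banach norm) or a Hilbert-space-only fact is used. Each such place should either turn out not to need the Hilbert structure or be covered by Globevnik's property. The weight functions $w$, the shifts $V_j$, the graph Laplacians $L_W^{(D)}$ and the Rayleigh-quotient minimisation in Propositions \ref{prop:min_eps_tau1} and \ref{prop:min_eps_tau} all live on the scalar space $\ell^2(G)$, which is still a Hilbert space. So these parts, and hence $\eps_p(W,A)$, carry over verbatim.

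\textbf{Step 1: the localisation estimates.} In Propositions \ref{prop:tau1} and \ref{prop:tau}, the computation \eqref{eq:lem-trick1} uses only $\|x\|^2=\sum_i\|x_i\|_Y^2$. This is the definition of the norm on $\ell^2(G,Y)$ for any Banach space $Y$. The commutator bound uses $\|b_i^{(j)}(w_{i-j-k}-w_{i-k})x_{i-j}\|_Y\le\|b^{(j)}\|_\infty|w_{i-j-k}-w_{i-k}|\,\|x_{i-j}\|_Y$, which holds for any $b_i^{(j)}\in L(Y)$. Minkowski's inequality is applied to the scalar sequences $\xi_k,\eta_{j,k}$. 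Hence \eqref{eq:patchAx_tau1} and \eqref{eq:patchAx_tau} hold unchanged, and so do Corollaries \ref{cor:tau1}, \ref{cor:tau1_sandwich_w}, \ref{cor:tau1_sandwich_W} and \ref{cor:nu_tau} and Lemma \ref{lem:upper_tau1}. The facts \eqref{eq:numon} and \eqref{eq:Lipschitz} are Banach-space facts.

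\textbf{Step 2: the duality-based identities.} Replace $A^*$, which acts on $\ell^2(G,Y)^*\cong\ell^2(G,Y^*)$, by the Banach adjoint. This is consistent with the paper's convention of the transpose without conjugation. The entries $a_{ij}^*\in L(Y^*)$ have the same diagonal suprema, so $\eps_p(W,A^*)=\eps_p(W,A)$ still holds. However, Corollary \ref{cor:tau1_sandwich_W} applied to $A^*$ needs $Y^*$ in place of $Y$, so we must know that $Y^*$ qualifies. Under Globevnik's property this is fine, since the definition is symmetric in $E$ and $E^*$; in Step 1 any Banach space is allowed anyway. The formula \eqref{eq:invnu} is a standard Banach-space result. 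The identity \eqref{eq:Ak*}, $(P_kAP_k)^*=P_kA^*P_k$, holds for the Banach adjoint. The elementary lemma $\nu(B)=\nu(B^*)$ for finite square patches needs $\dim Y<\infty$, exactly as before. Its invertible case uses \cite[Lemma 2.10]{HagLiSei}, which I expect to be a Banach-space statement.

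\textbf{Step 3: the main obstacle, continuity in $\eps$.} The genuinely Hilbert-dependent inputs are \eqref{eq:closspeps} and the Hausdorff continuity of $\eps\mapsto\speps A$. These are used in Corollaries \ref{cor:conv_tau1} and \ref{cor:conv_tau} and in the remark after Corollary \ref{cor:speps_tau}. The first task is to establish that $\ell^2(G,Y)$ has Globevnik's property whenever $Y$ does. For infinite-dimensional $Y$ this follows from \cite{BoykoKadets}, which shows that complex uniform convexity of $Y$ or $Y^*$ passes to $\ell^2(G,Y)$ and its dual $\ell^2(G,Y^*)$. For finite-dimensional $Y$, the space $\ell^2(G,Y)$ is uniformly convex (all norms on $Y$ are equivalent, although not equal) only if the norm of $Y$ is. So here I would instead invoke that $Y$ finite-dimensional means $Y$ or $Y^*$ has some complex-uniformly-convex structure, or argue directly with the Boyko--Kadets result for that case; this is the point that needs care.

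Given Globevnik's property, \cite{Globevnik74,Globevnik76,Shargo08,Shargo09} yield \eqref{eq:closspeps} and Hausdorff continuity. Finally, the finite patches $A_k$ act on $\ell^2(k+W,Y)$, which also inherits the property. Therefore the sandwich arguments \eqref{eq:5} and the liminf/limsup chain in Corollary \ref{cor:conv_tau} go through word for word.
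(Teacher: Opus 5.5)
\paragraph{Verdict.} Steps~1 and~2 are correct. So is the reduction in Step~3 to Globevnik's property of $\ell^2(G,Y)$ and of the patch spaces $\ell^2(k+W,Y)$. This is exactly the paper's route: its only justification is the paragraph before the proposition, which combines \cite{BoykoKadets} with \cite{Globevnik74,Globevnik76,Shargo08,Shargo09}.

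\paragraph{The finite-dimensional gap.} The gap you flag for finite-dimensional $Y$ is genuine, and neither of your fixes closes it.
\begin{itemize}
\item Take $G$ infinite. Then $E=\ell^2(G,Y)$ is infinite-dimensional, so by definition it has Globevnik's property only if $E$ or $E^*=\ell^2(G,Y^*)$ is complex uniformly convex.
\item $Y$ and $Y^*$ embed isometrically into $E$ and $E^*$, as sequences supported at one point. Complex uniform convexity passes to subspaces. So $E$ has the property only if $Y$ or $Y^*$ is complex uniformly convex.
\item A finite-dimensional $Y$ need not satisfy this. For $Y=\C^3$ with $\|(a,b,c)\|=\max\{|a|,\,|b|+|c|\}$, the dual norm is $\|(a,b,c)\|_{Y^*}=|a|+\max\{|b|,|c|\}$. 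The unit spheres of $Y$ and $Y^*$ contain the whole complex discs $\{(1,\zeta,0):|\zeta|\le 1\}$ and $\{(0,1,\zeta):|\zeta|\le 1\}$, respectively. Hence neither space is complex uniformly convex, and $\ell^2(G,Y)$ fails Globevnik's property.
\end{itemize}

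\paragraph{Why your fixes fail.} Your first fix is false for the given norm. Passing to an equivalent complex uniformly convex norm does not help either, since $\speps A$ and \eqref{eq:closspeps} depend on the norm itself, not on its isomorphism class. Your second fix fails because \cite{BoykoKadets} transfers complex uniform convexity from $Y$ (or $Y^*$) to $\ell^2(G,Y)$ (or $\ell^2(G,Y^*)$); by the subspace argument above it cannot create it. In this case the cited results give nothing, and the paper's sentence ``$\ell^2(G,Y)$ has it if $Y$ has it'' has the same hole.

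\paragraph{What is actually proved.} Your argument, like the paper's, establishes the proposition in two cases:
\begin{itemize}
\item $Y$ or $Y^*$ is complex uniformly convex, which covers all Hilbert spaces of any dimension;
\item $G$ is finite.
\end{itemize}
The remaining case is finite-dimensional $Y$ with neither $Y$ nor $Y^*$ complex uniformly convex. There you would need a genuinely new argument excluding resolvent norms that are constant on open sets in $\ell^2(G,Y)$. Otherwise that case must be removed from the statement.
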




\subsection{\texorpdfstring{$\ell^p$}{lp} spaces with \texorpdfstring{$1<p<\infty$}{1<p<infty}}
Almost everything that has been said extends from $\ell^2(G,Y)$ to $\ell^p(G,Y)$ with $p\in (1,\infty)$. An exception is the minimisation of the truncation penalty as a function of the weight $w$ that has been tuned to the Hilbert space case $p=2$ here. 

The case $p=\infty$ is different in many respects, including of course the formula by which $\|x\|_p$ is defined. Because our arguments for $\|A^{-1}\|$ and pseudospectra involve the adjoint operator $A^*$, we also have to exclude $p=1$, noting that, however, our two versions of \eqref{eq:patchAx_intro} are still valid in $\ell^1(G,Y)$.

\subsection{Band-dominated operators} 
Our band operators \eqref{eq:A}, built by addition and composition from finitely many multiplication operators $M_b$ and shifts $V_j$, form an operator algebra in $L(\ell^2)$ that is however not closed in the operator norm. Passing to the closure leads to the Banach algebra of all so-called {\sl band-dominated operators} on $\ell^2$ -- a much larger operator class, where the matrix entries need not vanish, only decay in a certain way with their distance from the main diagonal. So every band-dominated operator $A$ is the norm limit of a sequence $(A_n)$ of band operators, whence $A=A_n+T_n$ with $\|T_n\|\to 0$. From here one can work with the formula 
\begin{equation} \label{eq:perturb}
\speps A\ =\ \speps(A_n+T_n)\ \subset\ \specn_{\eps+\|T_n\|}A_n
\end{equation}
with clever choices of $A_n\in BO(E)$ or even just $A_n\in\W(E)$ and $T_n$, see \cite{CW.Heng.ML:JST} for details.

%

\medskip

{\bf Acknowledgements.} The authors are grateful for inspiring conversations with Siegfried Beckus, Matt Colbrook, Mark Embree, Paul Hege, and Mattes Wittig.


\vfill
\noindent {\bf Author's addresses:}\\[2mm]
Simon~N.~Chandler-Wilde \hfill \href{mailto:s.n.chandler-wilde@reading.ac.uk}{\tt s.n.chandler-wilde@reading.ac.uk}\\
University of Reading\\
Department of Mathematics and Statistics\\
Reading, RG6 6AX\\
UK\\[5mm]
Marko Lindner\hfill \href{mailto:lindner@tuhh.de}{\tt lindner@tuhh.de}\\
Hamburg University of Technology (TUHH)\\
Institute of Mathematics\\
Am Schwarzenberg-Campus 3\\
D-21073 Hamburg\\
Germany\\[5mm]
Christian Seifert\hfill \href{mailto:christian.seifert@tuhh.de}{\tt christian.seifert@tuhh.de}\\
Hamburg University of Technology (TUHH)\\
Institute of Mathematics\\
Am Schwarzenberg-Campus 3\\
D-21073 Hamburg\\
Germany\\
and\\
University of the Free State\\
Mathematics and Applied Mathematics \\
Bloemfontein 9300\\
Republic of South Africa
\end{document}